\title{A heuristic for the distribution of point counts for random curves over a finite field}
\author{Jeffrey D. Achter, Daniel Erman, Kiran S. Kedlaya, \\ Melanie Matchett Wood, and David Zureick-Brown}
\newtheorem{theorem}{Theorem}
\newtheorem{conjecture}[theorem]{Conjecture}
\newtheorem{corollary}[theorem]{Corollary}
\newtheorem{heuristic}[theorem]{Heuristic}
\newtheorem{lemma}[theorem]{Lemma}
\DeclareMathOperator{\an}{an}
\DeclareMathOperator{\Aut}{Aut}
\DeclareMathOperator{\Prob}{Prob}
\DeclareMathOperator{\Spec}{Spec}
\DeclareMathOperator{\Tr}{Tr}
\DeclareMathOperator{\USp}{USp}
\DeclareMathOperator{\Frob}{Frob}
\newcommand{\FF}{\mathbb{F}}
\newcommand{\ZZ}{\mathbb{Z}}
\newcommand{\QQ}{\mathbb{Q}}
\newcommand{\CC}{\mathbb{C}}
\def\cet{{c,\mathrm{et}}}
\def\stable{{\mathrm{stable}}}
\def\nonstable{{\mathrm{unstable}}}
\newcommand{\floor}[1]{{\left\lfloor #1 \right\rfloor}}
\begin{document}
\maketitle

\begin{abstract}
How many rational points are there on a random algebraic curve of large genus $g$ over a given finite field $\FF_q$? We propose a heuristic for this question motivated by a (now proven) conjecture of Mumford on the cohomology of moduli spaces of curves; this heuristic suggests a Poisson distribution with mean $q+1+1/(q-1)$. We prove a weaker version of this statement in which $g$ and $q$ tend to infinity, with $q$ much larger than $g$.
\end{abstract}

\section{Introduction}

The purpose of this paper is to propose a heuristic answer to the following question: what is the distribution of the number of rational points on a random   algebraic curve over a fixed finite field $\FF_q$ as the genus goes to infinity?
This is a question that can be translated into a question about the number of $\FF_q$-points of the moduli space $M_{g,n}$ of curves of genus $g$ with $n$ marked points.  Our fundamental heuristic assumption is that, in the Grothendieck-Lefschetz trace formula to count $\FF_q$-points on $M_{g,n}$, only the tautological classes contribute to the main term in the limit; we prove that this assumption implies the distribution of points on a random curves goes to a Poisson distribution with mean $q+1+1/(q-1)$.
Moreover, one can make a more precise statement in a certain limit where $q$ and $g$ both tend to infinity, but $q$ grows significantly faster than $g$.
These predictions and results are in the spirit of the work of Ellenberg--Venkatesh--Westerland \cite{EllenbergVW:cohenLenstra} on the relationship between stable homology of Hurwitz spaces and Cohen-Lenstra heuristics; they are also in a sense reciprocal to the work of Faber--Pandharipande \cite{FaberPandharipande}, in which point counts on $M_{g,n}$ for small $g$ are used to study the tautological classes.

Before making these statements more precise, we describe some similar questions which have been studied and indicate how this question differs somewhat from these.
The distribution of the number of rational points on a random (smooth, projective, geometrically irreducible) algebraic curve of a given class over a given finite\footnote{The corresponding question over a number field is also central in arithmetic statistics, but has a rather different flavor. See \cite{ho} for a comprehensive survey.} field has become a fundamental theme in the nascent field of \emph{arithmetic statistics}. Some examples of classes for which this topic has been
studied previously include hyperelliptic curves \cite{kurlberg-rudnick}, cyclic trigonal curves \cite{bdfl1}, noncyclic trigonal curves \cite{wood}, cyclic $p$-gonal curves \cite{bdfl3, xiong-l}, superelliptic and cyclic $m$-gonal curves \cite{cwz}, abelian covers of the line \cite{xiong-abelian}, Artin-Schreier curves \cite{bdfls, entin, bdfl4}, smooth plane curves \cite{bdfl2}, complete intersections in a fixed projective space \cite{bucur-kedlaya}, and curves in a fixed Hirzebruch surface \cite{erman-wood}. In each of these cases, every curve $C$ in the family maps to a fixed base space $\phi\colon C \rightarrow P$ and the (asymptotic) distribution of points on a random $C$ is given by a sum of independent bounded random variables associated to the rational points of the base space.  For each $p\in P(\FF_q)$, the associated random variable is the number of rational points in $\phi^{-1} (p)$.

Of course, the most natural and interesting family of smooth, projective curves is the family of all such curves, but proving a result about the distribution of points in this family seems currently out of reach.
The class of arbitrary curves differs from the previously mentioned classes in several important ways. One is that the number of rational points on the varying curve is not \emph{a priori} bounded.
A prior example sharing this property is that of \cite{kurlberg-wigman}, who considered curves lying in a sequence of surfaces with unbounded point counts. In this case, the average number of points on the curves is unbounded, so one is forced to renormalize to get a limiting distribution with finite mean, which turns out to be Gaussian.

The second distinctive feature of the class of all curves, which separates it from both \cite{kurlberg-wigman} and most of the preceding examples, is that the moduli space is not rational or even unirational.  That is, an arbitrary curve cannot be specified uniformly in terms of a collection of parameters.  This makes even the ``denominator'' in the question, the total number of curves over $\FF_q$ of a fixed genus, extremely difficult to understand.  (See \cite{deJongKatz} for an upper bound.)

Finally, the lack of nontrivial maps from curves in the family to a fixed space means there is no way to make sensible probabilistic models which split the point count into a sum of independent random variables.

Let us now make things more precise for the class of curves.
Let $M_g$ denote the fine moduli space of curves of genus $g$ in the sense of Deligne and Mumford \cite{deligne-mumford}; it is an object in the category of algebraic stacks over $\Spec(\mathbb{Z})$. The set $|M_g(\mathbb{F}_q)|$ of (isomorphism classes of) $\mathbb{F}_q$-rational points of $M_g$ may then be identified with the set of isomorphism classes of smooth, projective, geometrically connected curves of genus $g$ over $\mathbb{F}_q$.
To simplify notation, let us further identify $|M_g(\mathbb{F}_q)|$ with a set consisting of one curve in each isomorphism class.
For $C \in |M_g(\mathbb{F}_q)|$, let $\Aut(C)$ be the group of automorphisms of $C$ as a curve over $\mathbb{F}_q$ (not over an algebraic closure over $\mathbb{F}_q$).
We equip $|M_g(\mathbb{F}_q)|$ with the probability measure
in which each point $x$ is weighted proportionally to $1/\# \Aut(C)$.
This is well-understood to be the most natural way to count objects with automorphisms, and matches the weighting of points in the Lefschetz trace formula for Deligne-Mumford stacks given by Behrend \cite{behrend}.

Let $C_g$ be the (random) curve associated to a random $x \in |M_g(\mathbb{F}_q)|$ drawn according to the above probability measure.
For each $g,$ $\# C_g(\mathbb{F}_q)$ is a random variable taking values in the nonnegative integers, and we are interested in the limiting behavior of the distributions of these random variables as $g \to \infty$.  We prove that a heuristic assumption about the cohomology of $M_{g,n}$ (Heuristic~\ref{heur:tautological}) implies
 that these distributions converge to a Poisson distribution with mean $q+1+1/(q-1) = q + 1 + q^{-1} + q^{-2} + \cdots$; more precisely, we show that
Heuristic~\ref{heur:tautological} implies the following predictions.
\begin{conjecture} \label{conj:poisson1}
Put $\lambda := \lambda(q) =  q+1+1/(q-1)$.
\begin{enumerate}
\def\theenumi{\alph{enumi}}
\item
For all nonnegative integers $n$,
\[
\lim_{g \to \infty} \Prob(\#C(\mathbb{F}_q) = n: C \in |M_g(\mathbb{F}_q)|) = \frac{\lambda^n e^{-\lambda}}{n!}.
\]
\item
For all positive integers $n$,
\[
\lim_{g \to \infty} \mathbb{E}(\#C(\mathbb{F}_q)^n:  C \in |M_g(\mathbb{F}_q))|) = \sum_{i=1}^n \genfrac\{\}{0pt}{}{n}{i} \lambda^i,
\]
where $\genfrac\{\}{0pt}{}{n}{i}$ denotes a Stirling number of the second kind (i.e., the number of unordered partitions of $\{1,\dots,n\}$ into $i$ disjoint sets).
\end{enumerate}
\end{conjecture}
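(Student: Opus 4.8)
The plan is to express the falling-factorial moments of $\#C_g(\FF_q)$ as ratios of point counts on the moduli stacks $M_{g,n}$, evaluate those by the Lefschetz trace formula together with Poincaré duality and the conjecturally tautological stable cohomology, and then pass from moments back to the distribution. For a smooth projective curve $C/\FF_q$ with $m = \#C(\FF_q)$, the number of ordered $n$-tuples of \emph{distinct} $\FF_q$-rational points of $C$ is the falling factorial $(m)_n := m(m-1)\cdots(m-n+1)$; an $\FF_q$-point of $M_{g,n}$ is exactly a curve of genus $g$ equipped with such a tuple, so the orbit–stabilizer formula applied to the $\Aut(C)$-action on tuples gives the groupoid identity
\[
\#M_{g,n}(\FF_q) \;=\; \sum_{C \in |M_g(\FF_q)|} \frac{1}{\#\Aut(C)}\,\big(\#C(\FF_q)\big)_n .
\]
Since our probability measure weights $C \in |M_g(\FF_q)|$ by $1/\#\Aut(C)$, dividing through by $\#M_g(\FF_q)$ yields $\mathbb{E}\big[(\#C_g(\FF_q))_n\big] = \#M_{g,n}(\FF_q)/\#M_g(\FF_q)$, so it suffices to understand the numerator and denominator as $g\to\infty$.

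Next I would apply Behrend's Lefschetz trace formula \cite{behrend} for the smooth Deligne--Mumford stack $M_{g,n}/\FF_q$ of dimension $d = 3g-3+n$, and then Poincaré duality, to obtain
\[
\#M_{g,n}(\FF_q) \;=\; q^{3g-3+n}\sum_{j \ge 0}(-1)^j\,\Tr\!\big(\Frob_q^{-1} \mid H^j(M_{g,n,\overline{\FF}_q},\QQ_\ell)\big).
\]
By Harer-type homological stability (with marked points), for each fixed $j$ the group $H^j(M_{g,n})$ is independent of $g$ once $g$ is large, and by the Madsen--Weiss theorem and its $n$-pointed refinement this stable value is the polynomial $\QQ_\ell$-algebra on the tautological classes $\kappa_1,\kappa_2,\dots$ (with $\kappa_i$ in degree $2i$) and $\psi_1,\dots,\psi_n$ (in degree $2$), on which $\Frob_q$ acts by $q^i$ and by $q$ respectively (all classes pure of weight equal to half their degree). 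Invoking Heuristic~\ref{heur:tautological}, the contribution of the remaining, unstable cohomology to the normalized alternating sum is $o(1)$ as $g\to\infty$, so in the limit the alternating sum becomes the corresponding sum over the stable tautological algebra; being a symmetric algebra, this sum factors as an Euler product, absolutely convergent for $q>1$:
\[
\lim_{g\to\infty}\;\sum_{j\ge 0}(-1)^j\,\Tr\!\big(\Frob_q^{-1}\mid H^j(M_{g,n})\big) \;=\; \Big(\prod_{i\ge1}\frac{1}{1-q^{-i}}\Big)\Big(\frac{1}{1-q^{-1}}\Big)^{n}.
\]

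Taking the ratio from the first step — the case $n=0$ gives $\#M_g(\FF_q)/q^{3g-3}\to\prod_{i\ge1}(1-q^{-i})^{-1}$, which cancels the identical factor in the numerator — produces
\[
\lim_{g\to\infty}\mathbb{E}\big[(\#C_g(\FF_q))_n\big] \;=\; q^{n}\Big(\frac{1}{1-q^{-1}}\Big)^{n} \;=\; \Big(\frac{q^2}{q-1}\Big)^{n} \;=\; \lambda^{n},
\]
which are precisely the falling-factorial moments of a Poisson random variable of mean $\lambda = q+1+1/(q-1)$. Part (b) then follows from linearity of expectation and the classical identity $x^{n} = \sum_{i=1}^{n}\genfrac\{\}{0pt}{}{n}{i}(x)_i$, which converts falling-factorial moments into ordinary ones. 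Part (a) follows by the method of moments: the Poisson distribution is determined by its moments, and since each $\#C_g(\FF_q)$ is $\ZZ_{\ge0}$-valued, convergence in law to a Poisson limit is equivalent to pointwise convergence of the probability mass functions to $\lambda^n e^{-\lambda}/n!$.

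The main obstacle, beyond Heuristic~\ref{heur:tautological} itself (which is assumed), is the interchange of the limit $g\to\infty$ with the alternating sum over cohomological degrees: one must simultaneously control the contribution of the growing, unstable range — which is exactly what the heuristic is designed to bound, as an $o(q^{3g-3+n})$ error term — and confirm that the stable-range partial sums converge to the full Euler product. A secondary but essential point is pinning down the correct Tate twists on the stable tautological classes together with the $q$-Pochhammer bookkeeping, so that the $n=0$ normalization genuinely cancels and the specific constant $\lambda = q+1+1/(q-1)$, rather than some other rational function of $q$, emerges.
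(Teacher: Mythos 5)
Your overall architecture matches the paper's: falling-factorial moments as the ratio $\#|M_{g,n}(\FF_q)|/\#|M_g(\FF_q)|$, the Behrend trace formula plus Poincar\'e duality, the stable tautological cohomology with its explicit Tate twists giving the Euler product $\prod_{j\ge1}(1-q^{-j})^{-1}(1-q^{-1})^{-n}$, cancellation against the $n=0$ case to get $\lambda^n$, and the passage from falling moments to ordinary moments to the distribution. All of that is sound and is essentially the paper's route.

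However, there is one genuine gap: you invoke Heuristic~\ref{heur:tautological} to dispose of ``the remaining, unstable cohomology'' in its entirety, but the heuristic as stated only concerns the quotient $B^*_{c,\mathrm{et}} = H^*_{c,\mathrm{et}}/R^*_{c,\mathrm{et}}$, i.e.\ the \emph{non-tautological} part of the unstable range. The image of the tautological ring in degrees below the stability threshold (the paper's $\mathbf{T}^\nonstable_{g,n,q}$) is not covered by the heuristic and must be shown negligible by a separate argument. This is not automatic from stability: the tautological ring $R_n$ has graded dimensions $\dim R_n^{2i}$ that grow with $i$ (like partition numbers times a polynomial factor), and the unstable range has length growing linearly in $g$, so one needs a quantitative bound. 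The paper supplies exactly this in Lemma~\ref{L:Tasymptotic}(b): writing $HS_{R_n}(z)=Q_n(z^2)P(z^2)$ gives $\dim R_n^{2i}\le \exp(c_n\sqrt{i})$, hence the unstable tautological contribution is bounded by $\sum_{i>\floor{(2g-2)/3}} q^{-i/2}\exp(c_n\sqrt{i})\to 0$. Your proposal never states or uses such a bound, and without it the step ``stable-range partial sums converge to the full Euler product and everything else is $o(q^{d_{g,n}})$'' does not follow from the heuristic alone. The fix is short (insert the subexponential dimension estimate and the geometric-decay comparison), but as written the argument attributes to Heuristic~\ref{heur:tautological} strictly more than it asserts.
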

Note that part (b) implies part (a): the moment sequence of the Poisson distribution has exponential growth and thus determines the distribution uniquely \cite[Theorem~30.1]{billingsley}, and for such a limiting distribution convergence
at the level of moments implies convergence at the level of distributions
\cite[Theorem~30.2]{billingsley}.
If we let $(X)_n:=X(X-1)\cdots(X-n+1)$, then the falling moments
\begin{equation}
\lim_{g \to \infty} \mathbb{E}((\#C(\mathbb{F}_q))_n:  C \in |M_g(\mathbb{F}_q)|) = \lambda^n
\end{equation}
(for all positive integers $n$) are equivalent to the standard moments in (b) above.

Let $M_{g,n}$ denote the moduli space of curves of genus $g$ with $n$ distinct marked points, again as an algebraic stack over $\Spec(\ZZ)$.
Each element of $|M_{g,n}(\mathbb{F}_q)|$ may now be identified (by fixing a representative of each isomorphism class) with a tuple $(C, P_1,\dots,P_n)$ where $C$ is as before and
$P_1,\dots,P_n$ are distinct elements of $C(\mathbb{F}_q)$.
We equip the points of $|M_{g,n}(\mathbb{F}_q)|$ with the weights where $(C, P_1,\dots,P_n)$ has weight $1/\#\Aut(C, P_1,\dots,P_n)$ (i.e., we only consider automorphisms of $C$ fixing $P_1,\dots,P_n$). By an easy orbit counting argument,
$$
\mathbb{E}((\#C(\mathbb{F}_q))_n:  C \in |M_g(\mathbb{F}_q)|) = \frac{\# |M_{g,n}(\mathbb{F}_q)|}{\# |M_g(\mathbb{F}_q)|}
$$
(where $\#$ denotes weighted count).
Thus, we may rewrite Conjecture~\ref{conj:poisson1} as the statement
\begin{equation}\label{C:newvers}
\lim_{g \to \infty} \frac{\# |M_{g,n}(\mathbb{F}_q)|}{\# |M_g(\mathbb{F}_q)|} = \lambda^n.
\end{equation}
Let us now make explicit how we would like to study $\# |M_{g,n}(\mathbb{F}_q)|/\# |M_{g}(\mathbb{F}_q)|$ using the Grothendieck-Lefschetz-Behrend trace formula.
For a smooth Deligne-Mumford stack $X$ over $\FF_q$, the trace formula asserts that for any prime $\ell$ not dividing $q$,
\[
\#|X(\FF_q)| = \sum_{i=0}^{2 \dim(X)} (-1)^i \mathrm{Trace}(\mathrm{Frob}, H^i_{c,\mathrm{et}}(X_{\overline{\FF}_q}, \mathbb{Q}_{\ell}))
\]
where $X_{\overline{\FF}_q}$ denotes the base extension of $X$ from $\FF_q$ to $\overline{\FF}_q$, $H^i_{c,\mathrm{et}}(X_{\overline{\FF}_q}, \mathbb{Q}_{\ell})$ denotes compactly supported \'etale cohomology, and $\mathrm{Frob}$ is the geometric Frobenius automorphism on $X_{\overline{\FF}_q}$. By Deligne's proof of the Riemann hypothesis for algebraic varieties, each eigenvalue $\alpha$ of $\mathrm{Frob}$ on
$H^i_{c,\mathrm{et}}(X_{\overline{\FF}_q}, \mathbb{Q}_{\ell})$ is an algebraic integer with the property that for some $w \in \{0,\dots,i\}$ (called the \emph{weight} of $\alpha$), the conjugates of $\alpha$ in $\CC$ all have absolute value $q^{w/2}$.

This suggests that one should be able to estimate $\#|M_{g,n}(\FF_q)|$, and hence the ratio $\#|M_{g,n}(\FF_q)|/\#|M_g(\FF_q)|$, by computing the action of geometric Frobenius on the highest-degree cohomology groups of $M_{g,n,\overline{\FF}_q}$ and burying the other contributions to the trace formula in an error term.  Moreover, the highest degree cohomology groups with their Frobenius action are known exactly (see Theorem~\ref{T:etale}): they are spanned by so-called tautological classes (see below).
Unfortunately, this approach does not lead to any provable estimates for fixed $q$ because the Betti numbers of $M_{g,n,\overline{\FF}_q}$ grow superexponentially in $g$ (e.g., see \cite{harer-zagier} for the calculation of the Euler characteristic).  Thus, even though terms from lower degree cohomology groups contribute to the Grothendieck-Lefschetz sum with smaller weight, there are so many of them that they cannot \emph{a priori} be treated as negligible compared to the top-degree contributions.

Despite this imbalance, we can still make a reasonable heuristic about what we expect the asymptotics of the Grothendieck-Lefschetz sum to be.
One can classify the Frobenius eigenvalues of $H^*_{c,\mathrm{et}}$ of each weight $w$ as ``causal'' and ``random.''  The causal eigenvalues are the ones whose presence is compelled by the existence of certain algebraic cycles (in our case, the eigenvalues of the tautological classes); these eigenvalues must be integral powers of $q$. It is plausible to model the random eigenvalues of a given weight $w$ by the eigenvalues of a random unitary\footnote{In middle cohomology, it is more natural to use a random unitary symplectic matrix or a random Hermitian matrix instead, but the same discussion applies to these models.} matrix times $q^{w/2}$.
Let $d_{g,n}$ be the relative dimension of $M_{g,n}$ over $\Spec(\ZZ)$, which is $3g-3+n$ for $g>1$.
Let $b_k$ be the number of ``random'' eigenvalues of weight $2d_{g,n}-k$ (i.e., of \emph{coweight} $k$).
We have $b_k=0$ for $k\leq \frac{2g-2}{3}$; see Theorem~\ref{T:etale}.  For $k>\frac{2g-2}{3}$, if there are few eigenvalues of coweight $k$, e.g. $b_k=o(q^{k/2})$, then the weight $k$ eigenvalues contribute nothing to the Grothendieck-Lefschetz sum in the limit as $g\to\infty$.  On the other hand, if there are many eigenvalues of coweight $k$, and we model them with eigenvalues of a large random unitary matrix, we know from a result of Diaconis--Shahshahani
\cite{diaconis-shahshahani} that this matrix has bounded trace with high probability. It is thus a sensible heuristic to neglect the contribution of all but the causal eigenvalues.
Our neglect of the random Frobenius eigenvalues is also consistent with a commonly held philosophy in the study of moduli spaces, that no natural geometric questions depend on the non-tautological classes (e.g., see \cite{vakil}).

That this heuristic is sensible relies crucially on the fact that there are no random eigenvalues of large weight, which is a deep fact about the cohomology of moduli spaces of curves conjectured by Mumford and later proved using topological techniques (see Section~\ref{S:stability} for references).  The compactly supported \'etale cohomology in high degrees (or equivalently by Poincar\'e duality and a Betti-\'etale comparison isomorphism, the Betti cohomology in low degrees; see Theorem~\ref{T:etale}) is spanned by \emph{tautological} classes,
i.e., classes which arise from algebraic cycles produced by canonical morphisms between moduli spaces. The prototypical example of such a class is the first Chern class of the relative dualizing sheaf of the morphism $M_{g,n} \to M_{g,n-1}$ obtained by forgetting one marked point.

We may formalize our heuristic as follows.  Write $R^*_{c,\mathrm{et}}(M_{g,n,\overline{\FF}_q}, \mathbb{Q}_{\ell})$ for the subspace of $H^i_{c,\mathrm{et}}(M_{g,n,\overline{\FF}_q}, \mathbb{Q}_{\ell})$ generated by tautological classes, and put
$B^*_{c,\mathrm{et}}(M_{g,n,\overline{\FF}_q}, \mathbb{Q}_{\ell}):=H^*_{c,\mathrm{et}}(M_{g,n,\overline{\FF}_q}, \mathbb{Q}_{\ell})/R^*_{c,\mathrm{et}}(M_{g,n,\overline{\FF}_q}, \mathbb{Q}_{\ell})$.

\begin{heuristic}\label{heur:tautological}
As $g\to \infty$, only the tautological classes are asymptotically relevant to a Grothendieck-Lefschetz trace formula computation of $\#|M_{g,n}(\mathbb F_q)|$.  More precisely,
\[
\lim_{g\to\infty} \frac{\sum_{i=0}^{2 d_{g,n}- \frac{2g-2}{3} } (-1)^i \mathrm{Trace}(\mathrm{Frob}, B^i_{c,\mathrm{et}}(M_{g,n,\overline{\FF}_q}, \mathbb{Q}_{\ell}))}{q^{d_{g,n}}} =0.
\]
\end{heuristic}

It is convenient for our heuristic that the tautological classes are \emph{stable}.
This means that for $i \geq 2 d_{g,n} - \frac{2g-2}{3}$, the groups
$R^i_{c,\mathrm{et}}(M_{g,n,\overline{\FF}_q}, \mathbb{Q}_{\ell})$ (and thus  the groups
$H^i_{c,\mathrm{et}}(M_{g,n,\overline{\FF}_q}, \mathbb{Q}_{\ell})$) can be described in a manner independent of $g$, making it particularly nice to take the limit in $g$.  Further, the number of lower degree tautological classes is sufficiently bounded that we can ignore their contribution to the Grothendieck-Lefschetz sum.

Our first main result is the following theorem.
\begin{theorem}\label{T:getconj}
Heuristic~\ref{heur:tautological} implies Conjecture~\ref{conj:poisson1}.
\end{theorem}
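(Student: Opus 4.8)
\emph{Proof strategy.}
By the reductions already recorded --- the orbit-counting identity expressing $\mathbb{E}((\#C(\FF_q))_n)$ as $\#|M_{g,n}(\FF_q)|/\#|M_g(\FF_q)|$, and the fact that the moments of a Poisson law grow slowly enough to pin the distribution down, so that convergence of moments forces convergence in distribution --- it suffices to prove \eqref{C:newvers}, i.e.\ that $\#|M_{g,n}(\FF_q)|/\#|M_g(\FF_q)| \to \lambda^n$ as $g\to\infty$. The plan is to apply the Grothendieck--Lefschetz--Behrend trace formula separately to $M_{g,n}$ and to $M_g$, normalize the two counts by $q^{d_{g,n}}$ and by $q^{d_{g,0}}$, and show each normalized count converges. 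Writing $d=d_{g,n}=3g-3+n$, I would split the trace formula for $\#|M_{g,n}(\FF_q)|$ into the contribution of $H^i_\cet(M_{g,n,\overline{\FF}_q},\QQ_\ell)$ in the ``stable'' range $i\geq 2d-\frac{2g-2}{3}$ and that in the complementary ``unstable'' range, and in the unstable range further split $\Tr(\Frob,H^i_\cet)=\Tr(\Frob,R^i_\cet)+\Tr(\Frob,B^i_\cet)$ via the exact sequence defining $B^*_\cet$. This isolates three contributions to $\#|M_{g,n}(\FF_q)|/q^d$ to control: the stable piece, the unstable tautological piece, and the unstable non-tautological piece --- the last of which is exactly what Heuristic~\ref{heur:tautological} asserts vanishes in the limit.

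For the stable piece I would invoke Theorem~\ref{T:etale}: in the range $i\geq 2d-\frac{2g-2}{3}$ the group $H^i_\cet(M_{g,n,\overline{\FF}_q},\QQ_\ell)$ is nonzero only for even $i$, has dimension independent of $g$, and carries a Frobenius action by a power of $q$. Concretely, by Poincaré duality $H^{2d-2j}_\cet$ has dimension equal to the degree-$j$ graded piece of the stable tautological ring, on which $\Frob$ acts by $q^{d-j}$; and that ring is the free graded-polynomial algebra on the classes $\kappa_i$ ($i\geq 1$, of cohomological degree $2i$) and $\psi_1,\dots,\psi_n$ (cohomological degree $2$), so its Hilbert series is $P_n(t):=(1-t)^{-n}\prod_{i\geq 1}(1-t^i)^{-1}$. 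Hence the stable piece of the trace formula equals $q^d\sum_{j=0}^{\floor{(g-1)/3}}([t^j]P_n(t))\,q^{-j}$, and since $1/q<1$ this, divided by $q^d$, converges to $P_n(1/q)$ as $g\to\infty$.

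For the unstable tautological piece, Poincaré duality identifies it, up to sign, with a sum over $m>\frac{g-1}{3}$ of traces of absolute value $q^{d-m}\dim R^{2m}(M_{g,n})$, so it is bounded in absolute value by $q^d\sum_{m>(g-1)/3}\dim R^{2m}(M_{g,n})\,q^{-m}$. Here I would use that the tautological ring of the open stack $M_{g,n}$ is generated by the $\kappa$- and $\psi$-classes, whence $\dim R^{2m}(M_{g,n})\leq [t^m]P_n(t)$ for every $g$; consequently $\sum_{m\geq 0}\dim R^{2m}(M_{g,n})\,q^{-m}\leq P_n(1/q)<\infty$ uniformly in $g$, so the tail over $m>\frac{g-1}{3}$ tends to $0$ and this piece contributes $o(q^d)$. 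Running the identical argument with $n=0$ gives $\#|M_g(\FF_q)|/q^{3g-3}\to P_0(1/q)$ with $P_0(t)=\prod_{i\geq 1}(1-t^i)^{-1}$, and assembling the three pieces for $M_{g,n}$ and dividing by the result for $M_g$ yields
\[
\lim_{g\to\infty}\frac{\#|M_{g,n}(\FF_q)|}{\#|M_g(\FF_q)|}=q^n\cdot\frac{P_n(1/q)}{P_0(1/q)}=\frac{q^n}{(1-q^{-1})^n}=\left(\frac{q^2}{q-1}\right)^n=\lambda^n,
\]
which is \eqref{C:newvers}.

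The step I expect to be the main obstacle is the unstable tautological estimate: it requires a bound on $\dim R^{2m}(M_{g,n})$ that is simultaneously uniform in $g$ and small enough in $m$ to be dominated by the geometric factor $q^{-m}$. The partition-type count $[t^m]P_n(t)$ does the job, but one must pin down the statement that $R^*(M_{g,n})$ is generated by $\kappa$- and $\psi$-classes in exactly the form needed and verify the (elementary) summability. By contrast, the stable piece and the passage between $M_{g,n}$ and $M_g$ are bookkeeping: tracking Tate twists through Poincaré duality, checking that the truncation point $\frac{2g-2}{3}$ in the trace formula lies inside the Harer/Madsen--Weiss stability range so that Theorem~\ref{T:etale} applies, and summing convergent series in $1/q$.
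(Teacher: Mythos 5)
Your proposal is correct and follows essentially the same route as the paper: the same three-way decomposition of the trace formula into stable, unstable tautological, and non-tautological pieces, with Theorem~\ref{T:etale} handling the stable range and a partition-type dimension bound killing the unstable tautological tail (the paper phrases this bound as $\dim R^{2i}_n \le \exp(c_n\sqrt{i})$, which is the same estimate as your $[t^m]P_n(t)$ comparison). The step you flag as the main obstacle is in fact immediate in the paper's setup, since $R^*_{c,\mathrm{et}}$ is \emph{defined} as the image of the map from the polynomial ring $R_{n,\ell}$ in the $\kappa$- and $\psi$-classes, so its graded dimensions are bounded by those of $R_n$ by construction.
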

Our second main result establishes unconditionally a weaker version of Conjecture~\ref{conj:poisson1} in which both $g$ and $q$ tend to infinity;
this result lends some credence to Conjecture~\ref{conj:poisson1}. In particular, since the error term is smaller than $q^{-m}$ for any fixed $m$, this result rules out any alternate conjecture in which each moment is a universal Laurent series in $q^{-1}$.

\begin{theorem}\label{T:weaker}
For any $K>144$, any function $q(g) > g^K$, and any nonnegative integer $n$, for $q = q(g)$
we have
\[
\lim_{g\rightarrow \infty}\frac{\# |M_{g,n}(\mathbb{F}_q)|}{\# |M_g(\mathbb{F}_q)|} = \lambda^n + O(q^{-g/6}).
\]
\end{theorem}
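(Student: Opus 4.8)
\emph{Strategy.} The plan is to run the Grothendieck--Lefschetz--Behrend trace formula on both $M_{g,n}$ and $M_g$, peel off the contribution of the stable top-degree cohomology --- which by Theorem~\ref{T:etale} is entirely tautological, with Frobenius action known exactly --- as the main term, and bound everything else crudely using Deligne's bound $q^{i/2}$ on Frobenius eigenvalues of $H^i_{c,\mathrm{et}}$ together with an explicit upper bound on the total Betti number of $M_{g,n}$. The hypothesis that $q=q(g)$ grows faster than $g^{144}$ is exactly what lets this crude bound win. Fix $n$ and set $d=d_{g,n}=3g-3+n$. By Theorem~\ref{T:etale}, for $i\ge 2d-\tfrac{2g-2}{3}$ the group $H^i_{c,\mathrm{et}}(M_{g,n,\overline{\FF}_q},\QQ_\ell)$ is the Poincar\'e dual of the part of the stable tautological ring $\QQ[\kappa_1,\kappa_2,\dots]\otimes\QQ[\psi_1,\dots,\psi_n]$ (with $\kappa_j$ in cohomological degree $2j$ and each $\psi_k$ in degree $2$) lying in cohomological degree $2d-i\le\tfrac{2g-2}{3}$, and geometric Frobenius acts on the dual of a class of degree $2m$ by the scalar $q^{d-m}$. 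As this ring is concentrated in even degrees, summing the trace formula over this range gives the main term
\[
S_{g,n}:=q^{d}\sum_{m=0}^{\lfloor (g-1)/3\rfloor}a_m(n)\,q^{-m},\qquad\text{where }a_m(n)=\dim\bigl(\QQ[\kappa_1,\kappa_2,\dots]\otimes\QQ[\psi_1,\dots,\psi_n]\bigr)_{2m}
\]
is the coefficient of $t^m$ in $(1-t)^{-n}\prod_{j\ge1}(1-t^j)^{-1}$; similarly $\#|M_g(\FF_q)|$ has main term $S_g:=S_{g,0}$.

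\emph{The ratio of main terms is $\lambda^n$.} Since $d_{g,n}-d_g=n$,
\[
\frac{S_{g,n}}{S_g}=q^{n}\cdot\frac{\sum_{m=0}^{\lfloor(g-1)/3\rfloor}a_m(n)\,q^{-m}}{\sum_{m=0}^{\lfloor(g-1)/3\rfloor}a_m(0)\,q^{-m}}.
\]
For $q>1$ the full sums converge to $(1-q^{-1})^{-n}\prod_{j\ge1}(1-q^{-j})^{-1}$ and $\prod_{j\ge1}(1-q^{-j})^{-1}$ respectively, and the tails beyond $m=\lfloor(g-1)/3\rfloor$ are controlled by the crude estimate $a_m(n)\le p(m)\binom{m+n}{n}\le e^{O(\sqrt m)}(m+1)^n$ ($p$ the partition function), which together with $q>g^{144}$ makes them $O(q^{-g/4})$. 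Hence, using $\lambda=q+1+\tfrac{1}{q-1}=\tfrac{q^2}{q-1}$,
\[
\frac{S_{g,n}}{S_g}=q^{n}(1-q^{-1})^{-n}+O(q^{-g/4})=\Bigl(\frac{q^2}{q-1}\Bigr)^{\!n}+O(q^{-g/4})=\lambda^n+O(q^{-g/4}).
\]

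\emph{The discarded cohomology is negligible.} Write $\#|M_{g,n}(\FF_q)|=S_{g,n}+E_{g,n}$, where $E_{g,n}$ gathers the trace of Frobenius on $H^i_{c,\mathrm{et}}$ for $i\le 2d-\tfrac{2g-2}{3}$. By Deligne's theorem every eigenvalue there has absolute value at most $q^{i/2}\le q^{d-(g-1)/3}$, so $|E_{g,n}|\le q^{d-(g-1)/3}B_{g,n}$ where $B_{g,n}:=\sum_i\dim H^i_{c,\mathrm{et}}(M_{g,n,\overline{\FF}_q},\QQ_\ell)$. Since $S_g\ge q^{3g-3}$ and (for $g$ large in our range) $|E_g|<S_g$, dividing gives
\[
\frac{\#|M_{g,n}(\FF_q)|}{\#|M_g(\FF_q)|}=\frac{S_{g,n}}{S_g}+O\bigl(q^{n-(g-1)/3}B_{g,n}\bigr)=\lambda^n+O(q^{-g/4})+O\bigl(q^{n-(g-1)/3}B_{g,n}\bigr).
\]
Both error terms are $O(q^{-g/6})$ once $B_{g,n}\le q^{g/6-O(1)}$, and for $q>g^{K}$ with $K>144$ this follows provided $B_{g,n}\le g^{Cg}$ for some constant $C<24$; here the factor relating $144$ to $24$ is the gap $(g-1)/3-g/6=g/6$.

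\emph{The Betti number bound, and the main obstacle.} Everything thus reduces to proving an explicit bound $\sum_i\dim H^i_{c,\mathrm{et}}(M_{g,n,\overline{\FF}_q},\QQ_\ell)\le g^{Cg}$ with $C<24$. I would prove this by induction through the stratification of the Deligne--Mumford compactification $\overline{M}_{g,n}=\bigsqcup_\Gamma M_\Gamma$ by stable dual graphs $\Gamma$: there are only $g^{O(g)}$ such graphs of genus $g$ with $n$ legs; each open stratum $M_\Gamma$ is a finite quotient of a product $\prod_v M_{g_v,n_v}$ in which every $(g_v,n_v)$ is strictly smaller than $(g,n)$ in the well-founded order where $2g-2+n$ is non-increasing and decreases unless $g$ does, so that cohomology multiplies over the vertices and can only shrink under the finite quotient; and the Deligne weight (excision) spectral sequences attaching the strata turn this into a recursion whose base cases are $\overline{M}_{0,m}$ and $\overline{M}_{1,m}$, whose Betti numbers are known explicitly, with $m$ as large as $\sim 2g$. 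Making this recursion close with an explicit constant $C<24$ --- in particular bounding the genus-$0$ and genus-$1$ base cases sharply enough that the super-exponential growth of the Betti numbers of $M_{g,n}$ never overwhelms the polynomial-in-$g$ size of $q$ --- is the main technical obstacle, and it is precisely this bookkeeping that determines both the error exponent $g/6$ and the threshold $K>144$.
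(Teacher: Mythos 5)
Your overall architecture is the same as the paper's: split the trace formula into the stable range (degrees $i\geq 2d_{g,n}-\frac{2g-2}{3}$, where Theorem~\ref{T:etale} and Poincar\'e duality give the tautological main term $q^{d_{g,n}}HS_{R_n}(q^{-1/2})$ up to a negligible tail), and bound everything of lower weight by $q^{d_{g,n}-\lfloor (g-1)/3\rfloor}$ times the total Betti number, with $q>g^{144}$ chosen precisely so that a bound of shape $g^{O(g)}$ on the total Betti number loses to the weight gap $g/6$. Your main-term computation, the tail estimate via $a_m(n)\leq e^{O(\sqrt m)}(m+1)^n$, and the bookkeeping relating $K>144$ to the exponent $g/6$ all match the paper (Lemma~\ref{L:Tasymptotic} and Lemma~\ref{L:constant K}).

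The genuine gap is the one you flag yourself: the explicit bound $\sum_i \dim H^i_{\cet}(M_{g,n,\overline{\FF}_q},\QQ_\ell)\leq g^{Cg}$ is not proved, and the route you sketch does not close. The stratification of $\overline{M}_{g,n}$ by stable dual graphs has $M_{g,n}$ itself as its open dense stratum, so the excision/weight spectral sequence only trades $H^*_c(M_{g,n})$ against $H^*(\overline{M}_{g,n})$ together with boundary strata built from strictly smaller $(g_v,n_v)$; to bound $H^*(\overline{M}_{g,n})$ you would again use the stratification, which reintroduces $H^*_c(M_{g,n})$ --- the recursion is circular and never reaches your genus-$0$ and genus-$1$ base cases without an independent input for the open (or the compactified) space. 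The paper avoids this entirely by quoting de~Jong--Katz \cite[Lemma 5.1]{deJongKatz}, which gives $\sum_i \dim H^i_{\cet}(M_{g,\overline{\FF}_q},\QQ_\ell)\leq (12g)!$ for $n=0$, and then propagates to general $n$ via the Leray/Serre spectral sequence for the forgetful maps $M_{g,i+1}\to M_{g,i}$ (each fiber is a genus-$g$ curve minus $i$ points, contributing a factor at most $2g+2$), yielding $(2g+2)^n(12g)!$, which is $g^{(12+o(1))g}$ and comfortably satisfies your requirement $C<24$. So your argument becomes complete if you replace the stratification induction by this citation (or by some other genuinely independent count, e.g.\ a ribbon-graph cell count for $M_{g,n}$), keeping the rest as is.
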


The key to proving Theorem~\ref{T:weaker} is that, so long as $q\gg g$,  the unstable homology is negligible in the Grothendieck-Lefschetz trace computation.

It would be interesting to compute what a heuristic similar to Heuristic~\ref{heur:tautological} suggests about the
average number of points on a stable curve of genus $g$, as $g\to \infty$.  Our approach fails to directly yield an answer.
In particular, we would seek a computation along the lines of
Lemma~\ref{L:Tasymptotic}, but this is complicated by the fact that the dimensions of the tautological cohomology
$R^i(\overline{M}_{g,n})$ can grow exponentially in $g$, as $g\to \infty$.

In Section~\ref{S:stability}, we review the topological results showing that the low degree singular cohomology of $M_{g,n}$ is tautological and giving a precise description of the cohomology groups.
In Section~\ref{S:stability etale}, we translate these results into compactly supported \'{e}tale cohomology using comparison isomorphisms and determine the effect of Frobenius.  In Section~\ref{S:heuristic implies conjecture} we prove Theorem~\ref{T:getconj}.  In Section~\ref{S:weaker}, we prove Theorem~\ref{T:weaker}.  In Section~\ref{S:random matrix models}, we outline some thoughts and questions about how a random matrix model might give evidence for or against Conjecture~\ref{conj:poisson1}.

\section*{Acknowledgements}
This project was started at the  AIM workshop ``Arithmetic statistics over number fields and function fields'' (January 27-31, 2014). Achter was supported by Simons Foundation grant 204164 and NSA grant H98230-14-1-0161.  Erman was supported by NSF grant DMS-1302057. Kedlaya was supported by NSF grant DMS-1101343; he also thanks MSRI for its hospitality during fall 2014 as supported by NSF grant DMS-0932078.  Wood was supported by the American Institute of Mathematics and NSF grants  DMS-1147782 and DMS-1301690.
Thanks also to Dan Abramovich, Jordan Ellenberg, Martin Olsson, Aaron Pixton, and Ravi Vakil for helpful discussions.

\section{Stability and tautological classes: singular cohomology}\label{S:stability}

Let $M_{g,\CC}^{\an}$ and $M_{g,n,\CC}^{\an}$ be the underlying topological spaces of the stacks $M_{g,\CC}$ and $M_{g,n,\CC}$.
We begin by recalling some deep results on the stable singular cohomology of $M_{g,n,\CC}^{\an}$. These results are typically stated without marked points; we must add a bit of extra analysis to deal with the markings.

\begin{theorem} \label{T:stable homology}
For any nonnegative integers $g,n,i$ with $i \leq \frac{2g-2}{3}$, there exists an isomorphism $H_i(M_{g,n,\CC}^{\an}, \QQ) \to H_i(M_{g+1,n,\CC}^{\an}, \QQ)$. By the universal coefficient theorem, this gives rise to an isomorphism
$H^i(M_{g,n,\CC}^{\an}, \QQ) \to H^i(M_{g+1,n,\CC}^{\an}, \QQ)$.
\end{theorem}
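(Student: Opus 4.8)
The plan is to deduce this from Harer's homological stability theorem for mapping class groups. First I would record the standard identification of the relevant (co)homology: since all automorphism groups occurring are finite, the map from $M_{g,n,\CC}^{\an}$ (the analytification of the stack) to its coarse space is a rational-(co)homology isomorphism, and because $M_{g,n,\CC}$ is the quotient of the contractible Teichm\"uller space by the mapping class group $\Gamma_{g,n}$ of a genus-$g$ surface with $n$ marked points, one has $H_i(M_{g,n,\CC}^{\an},\QQ)\cong H_i(\Gamma_{g,n},\QQ)$ for all $i$. Under this identification the asserted map is induced by the group inclusion $\Gamma_{g,n}\hookrightarrow\Gamma_{g+1,n}$ that extends a mapping class by the identity across a handle glued on away from the marked points. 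For $n=0$ the statement is then exactly Harer's stability theorem, in the sharp stable range $i\le\frac{2g-2}{3}$ established by Boldsen and Randal--Williams.

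For $n\ge1$ I would invoke the version of Harer's theorem allowing punctures: the genus-stabilization map is a homology isomorphism in the range $i\le\frac{2g-2}{3}$ \emph{uniformly in the number of punctures}, since attaching a handle is independent of the punctures. Here one must bridge three flavors of decoration on the surface: a ``marked point'' (a distinguished point that mapping classes are required to fix) is the same as a ``puncture'' for the purpose of the mapping class group, while capping off a boundary circle by a once-punctured disc relates the boundary and puncture variants by central $\ZZ$-extensions (Birman sequences), which do not affect rational cohomology in a way that disturbs the range; and passing from unordered to ordered marked points is a finite covering with symmetric-group deck action, hence harmless rationally. An alternative, more self-contained route is to induct on $n$ using the forgetful map $\pi\colon M_{g,n}\to M_{g,n-1}$, whose fiber is the universal genus-$g$ curve with its $n-1$ sections removed --- an aspherical surface, closed when $n=1$ and otherwise homotopy equivalent to a wedge of circles --- so that the Leray--Serre spectral sequence $E_2^{p,q}=H^p(M_{g,n-1},R^q\pi_*\QQ)\Rightarrow H^{p+q}(M_{g,n})$ has nonzero rows only for $q\le2$; the genus-stabilization maps give a map of such spectral sequences, the $q=0$ row is controlled by the inductive hypothesis, and the higher rows are controlled by homological stability with coefficients in the (possibly trivial, and in any case degree-$\le 1$) coefficient systems $R^q\pi_*\QQ$ built from $H^*$ of the punctured universal curve (Ivanov; Boldsen).

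The main obstacle is entirely one of ranges: the theorem insists on the clean bound $i\le\frac{2g-2}{3}$ with no dependence on $n$, whereas off-the-shelf twisted homological stability statements typically lose an additive constant (depending on the degree of the coefficient system) relative to the untwisted range, and in the inductive approach such losses would accumulate with $n$. I would therefore lean on the first route, quoting the sharpest available forms of Harer's theorem for punctured surfaces from Boldsen and Randal--Williams, and only use the spectral-sequence argument as a check and to make the stabilization map on $M_{g,n}$ explicit. The remaining points to verify are bookkeeping: that the hypotheses of the cited stability theorems are met for all $n\ge1$, that small genera pose no problem (the quantity $\frac{2g-2}{3}$ is $<1$ for $g\le2$, so the statement is vacuous there beyond $H_0$, and $M_{g,n,\CC}^{\an}$ is connected), and that the passage from homology to cohomology is the universal coefficient theorem over $\QQ$ exactly as stated.
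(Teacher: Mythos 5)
Your proposal is correct and follows essentially the same route as the paper: identify $H_i(M_{g,n,\CC}^{\an},\QQ)$ with the rational homology of the mapping class group $\Gamma_{g,n}$ and quote the sharpest homological stability theorems for punctured surfaces (Harer, with the improved range due to Boldsen and Randal--Williams, as collected in Wahl's survey), which is exactly the citation the paper gives. The paper's proof is just this reference, so your additional bookkeeping about punctures versus boundary and the spectral-sequence alternative is elaboration of the same argument rather than a different approach.
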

\begin{proof}
This was first proved with a slightly more restrictive bound on $i$ by
Harer \cite{Harer:stability-orientable, Harer:stability-spin}.
The statement as given includes results of several authors; see
\cite[Theorem 1.1]{Wahl:stability-handbook}.
\end{proof}

The proof of this result is ultimately topological: by Teichm\"uller theory, one may identify $M_{g,n,\CC}^{\an}$ up to homotopy with a classifying space of the mapping class group $\Gamma_{g,n}$ of a compact Riemann surface (without boundary) of genus $g$ with $n$ marked points. One may take a homotopy limit to obtain a group $\Gamma_{\infty,n}$ whose group (co)homology computes the stable (co)homology of $M_{g,n,\CC}^{\an}$.

Let us now momentarily restrict attention to the case $n=0$.
Following Mumford, we define the \emph{tautological ring} to be the graded polynomial ring $R := \QQ[\kappa_1, \kappa_2,\dots]$ with $\deg(\kappa_j) = 2j$.
We obtain a map from $R$ to the Chow ring of $M_g$ as follows: let $\psi$ be the relative dualizing sheaf of the morphism $M_{g,1} \to M_g$ which forgets the marked point, then let $\kappa_j$ be the pushforward of $\psi^{j+1}$ along $M_{g,1} \to M_g$.
\begin{theorem}
\label{T:stab-taut-class}
The induced map $R \to H^{*}(M_{g,\CC}^{\an}, \QQ)$ of graded rings is an isomorphism in degrees up to $\frac{2g-2}{3}$.
\end{theorem}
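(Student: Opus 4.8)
The plan is to reduce to the stable range and then invoke the (now proven) Mumford conjecture. By Theorem~\ref{T:stable homology} with $n=0$, for $i \le \frac{2g-2}{3}$ the cohomology $H^i(M_{g,\CC}^{\an}, \QQ)$ is independent of $g$, so it suffices to identify the \emph{stable} rational cohomology of the moduli spaces of curves together with the subring generated by the classes $\kappa_j$. Via Teichm\"uller theory, $\mathcal{T}_g$ is contractible and $M_{g,\CC}^{\an} = \mathcal{T}_g/\Gamma_g$ with $\Gamma_g$ the mapping class group of a genus-$g$ surface acting with finite stabilizers; since rational cohomology does not see the finite isotropy, $H^*(M_{g,\CC}^{\an}, \QQ) \cong H^*(\Gamma_g, \QQ) = H^*(B\Gamma_g, \QQ)$. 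Passing to the colimit $\Gamma_\infty = \mathrm{colim}_g\, \Gamma_g$ (the stabilization maps $\Gamma_g \to \Gamma_{g+1}$ being those inducing the isomorphisms of Theorem~\ref{T:stable homology}), the stable cohomology is $H^*(B\Gamma_\infty, \QQ)$, unchanged by Quillen's plus construction: $H^*(B\Gamma_\infty^+, \QQ)$. The classes $\kappa_j$ are defined uniformly in $g$ and are compatible with stabilization, so they assemble to classes in $H^*(B\Gamma_\infty^+, \QQ)$, and the theorem reduces to showing the resulting map $R \to H^*(B\Gamma_\infty^+, \QQ)$ is an isomorphism.

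Both halves of this isomorphism are known. Surjectivity together with freeness is the content of the theorem of Madsen--Weiss (the Mumford conjecture): there is a homotopy equivalence $\ZZ \times B\Gamma_\infty^+ \simeq \Omega^\infty \mathbf{MTSO}(2)$ with $\mathbf{MTSO}(2)$ the Madsen--Tillmann spectrum, and a standard computation identifies the rational cohomology of a path component of an infinite loop space with the free graded-commutative algebra on the positive-degree part of $H^*(\mathbf{MTSO}(2), \QQ)$; by the Thom isomorphism over $B\mathrm{SO}(2) = \mathbb{CP}^{\infty}$ this is the polynomial algebra $\QQ[\kappa_1, \kappa_2, \dots]$ with $\deg \kappa_j = 2j$. (Algebraic independence of the $\kappa_j$, i.e.\ injectivity of $R \to H^*$, had been established earlier by Miller and by Morita by pulling back along suitable families of surface bundles, and also follows from the displayed computation.) It then remains to match the generators on the infinite-loop-space side with the classes in the statement: over $\CC$, $c_1$ of the relative dualizing sheaf of $\pi\colon M_{g,1}\to M_g$ is the Euler class of the vertical cotangent bundle, and $\pi_*$ of its $(j+1)$st power is exactly the Miller--Morita--Mumford class of degree $2j$, which corresponds under Madsen--Weiss to the class built from the Euler class of the universal oriented $2$-plane bundle. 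Hence $R \to H^*(B\Gamma_\infty^+, \QQ)$ is an isomorphism, and, unwinding the reduction of the first paragraph, $R \to H^*(M_{g,\CC}^{\an}, \QQ)$ is an isomorphism in degrees up to $\frac{2g-2}{3}$.

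The main obstacle is the surjectivity statement, which is precisely Mumford's conjecture: a priori the stable cohomology of $B\Gamma_\infty$ could contain classes not expressible in the $\kappa_j$, and ruling this out requires the deep work of Madsen and Weiss (and its later simplifications), identifying $B\Gamma_\infty^+$ with an infinite loop space via the cobordism category and the Pontryagin--Thom construction. Given that input, together with Harer stability (already quoted as Theorem~\ref{T:stable homology}) and the classical Miller--Morita computations, the remaining steps — comparing the stack and its coarse space rationally, tracking the stabilization maps, and matching the two descriptions of the $\kappa_j$ — are routine and are best handled by citation rather than reproved here.
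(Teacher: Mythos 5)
Your proof is correct and follows essentially the same route as the paper, which likewise deduces the statement from Harer-type stability (Theorem~\ref{T:stable homology}) together with the Madsen--Weiss theorem identifying the stable cohomology ring with $\QQ[\kappa_1,\kappa_2,\dots]$; you have simply expanded the citation into its standard ingredients (passage to $B\Gamma_\infty^+$, the equivalence with $\Omega^\infty \mathbf{MTSO}(2)$, and the matching of the $\kappa_j$ with the Miller--Morita--Mumford classes). No further changes are needed.
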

\begin{proof}
This follows from Theorem~\ref{T:stable homology} plus a theorem of Madsen and Weiss identifying $R$ with the stable cohomology ring \cite{madsen-weiss}. \end{proof}

We now consider the effect of marked points.
Define the tautological ring $R_n = R[\psi_1,\dots,\psi_n]$ with $\deg(\psi_i) = 2$. We obtain a map from $R_n$ to the Chow ring of $M_{g,n}$ as follows: map $\kappa_j$ as before, and map $\psi_i$ to the relative dualizing sheaf of the morphism $M_{g,n} \to M_{g,n-1}$ which forgets the $i$-th marked point.
\begin{theorem}
The induced map $R_n \to H^{*}(M_{g,n,\CC}^{\an}, \QQ)$ of graded rings is an isomorphism in degrees up to $\frac{2g-2}{3}$.
\end{theorem}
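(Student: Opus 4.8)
The plan is to induct on $n$, the case $n=0$ being the preceding theorem. For the inductive step I would use the forgetful morphism $\pi\colon M_{g,n}\to M_{g,n-1}$ dropping the last marked point; on underlying analytic stacks this is a fibration whose fiber over $(C;P_1,\dots,P_{n-1})$ is the punctured surface $C\setminus\{P_1,\dots,P_{n-1}\}$, or the closed curve $C$ when $n=1$, and I would analyze its Leray spectral sequence
\[
E_2^{p,q}=H^p\!\left(M_{g,n-1,\CC}^{\an};\,R^q\pi_*\QQ\right)\Longrightarrow H^{p+q}\!\left(M_{g,n,\CC}^{\an};\QQ\right).
\]
Here $R^0\pi_*\QQ=\QQ$; for $n\ge2$ one has $R^q\pi_*\QQ=0$ for $q\ge2$ and a short exact sequence $0\to\mathbb V\to R^1\pi_*\QQ\to\QQ(-1)^{\oplus(n-2)}\to0$, where $\mathbb V$ is the standard weight-one symplectic local system of the universal curve, pulled back from $M_g$; for $n=1$, $\pi$ is smooth and proper, $R^1\pi_*\QQ=\mathbb V$, and $R^2\pi_*\QQ\cong\QQ(-1)$. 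So, in the range of degrees at issue, the entire $E_2$ page is built out of $H^*(M_{g,n-1};\QQ)$ and $H^*(M_{g,n-1};\mathbb V)$ together with Tate twists.

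To evaluate $E_2$ I would feed in two inputs. The inductive hypothesis identifies $H^*(M_{g,n-1};\QQ)$ with $R_{n-1}$ in the relevant range (which, by Theorem~\ref{T:stable homology}, is a stable range), in particular concentrated in even degrees. For the twisted groups I would prove, by the same induction on the number of marked points run with symplectic coefficients and twisted homological stability, a statement of the shape: in the stable range $H^*(M_{g,m};\mathbb V_\lambda)$ is a free $R_m$-module whose generators depend only on $\lambda$ and are sparse in each degree; the base case $m=0$ is the known computation of the stable cohomology of $M_g$ with coefficients in the symplectic local systems $\mathbb V_\lambda$ ($H^*_{\mathrm{stable}}(M_g;\mathbb V)$ turns out to be free over $R$ on one generator in each odd degree $\ge3$; in particular $H^1_{\mathrm{stable}}(M_g;\mathbb V)=0$, and in general the group has the parity of $|\lambda|$). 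Tensor powers of $\mathbb V$, hence all $\mathbb V_\lambda$ with $|\lambda|\le n$, enter this auxiliary induction, which is why one needs the full symplectic-coefficient computation rather than just $\mathbb V$ itself.

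The real content is then to pin down the differential $d_2$ on the $q=1$ row and the resulting assembly: $d_2$ does \emph{not} vanish, and its interaction with the (genuinely nontrivial) twisted cohomology is precisely what collapses the somewhat intricate $E_2$ page to a polynomial ring. The cleanest way to see the mechanism is via the universal curve: writing $\mathcal C$ for the universal curve over $M_{g,n-1}$, so that $M_{g,n}=\mathcal C\setminus\bigcup_{i<n}\sigma_i$ with the $\sigma_i$ the tautological sections, the Leray sequence for $\mathcal C\to M_{g,n-1}$ degenerates (it is the Leray sequence of a smooth proper morphism), and one is reduced to the Gysin sequence removing the sections. Each $[\sigma_i]\in H^2(\mathcal C)$ has the same ($=1$) component in the top Leray piece $R^2p_*\QQ\cong\QQ(-1)$, so the differences $[\sigma_i]-[\sigma_n]$ lie in lower filtration; tracking their Gysin pushforwards shows that removing the sections kills exactly the Tate part of $R^1\pi_*\QQ$ together with a matching part of $H^*(\mathcal C)$, while leaving a single new polynomial generator, namely $\psi_n$ (which restricts to zero on each fiber and hence lies in positive Leray filtration). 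Assembling, one obtains $H^*(M_{g,n,\CC}^{\an};\QQ)\cong R_{n-1}[\psi_n]=R_n$ in degrees $\le\frac{2g-2}{3}$; since the $\kappa_j$ and $\psi_1,\dots,\psi_{n-1}$ of $M_{g,n}$ are built from classes pulled back along $\pi$, and $\psi_n$ is the relative cotangent class of $\pi$, this isomorphism is the one induced by the tautological ring.

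I expect this last bookkeeping to be the main obstacle — controlling $d_2$, equivalently the section classes $[\sigma_i]$ and their Gysin pushforwards, together with the symplectic-coefficient inputs, tightly enough to conclude that nothing survives beyond $R_{n-1}$ and a single free generator $\psi_n$. A conceptually parallel route that makes the same cancellation vivid realizes $M_{g,n}=(M_{g,1})^{\times_{M_g}n}\setminus(\text{pairwise diagonals})$, computes the stable cohomology of the fiber power from that of $M_g$ with coefficients in $\mathbb V^{\otimes k}$, and excises the diagonals via the Kriz/Totaro model; once again the diagonal classes die in the open complement, which is what pares the answer down to a polynomial ring on the $\psi_i$ over $H^*_{\mathrm{stable}}(M_g)$.
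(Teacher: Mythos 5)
Your strategy is workable in principle, but it is a genuinely different and far heavier route than the paper's, and as written it is a plan rather than a proof. The paper disposes of the marked points in one stroke: it cites the stable splitting $B\Gamma_{\infty,n+1} \sim B\Gamma_{\infty,n} \times \CC\mathbb{P}^\infty$ of B\"odigheimer--Tillmann \cite[Corollary~1.2]{bodigheimer-tillmann} (see also \cite[Theorem~4.3]{tillmann}), which immediately gives that the stable cohomology of $M_{g,n}$ is that of $M_{g,n-1}$ with one new polynomial generator in degree $2$ (the class $\psi_n$), and then iterates and invokes Theorems~\ref{T:stable homology} and~\ref{T:stab-taut-class} to land in the stated range $\leq \frac{2g-2}{3}$ uniformly in $n$. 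Your Leray/Gysin analysis of the forgetful map is a legitimate alternative, and your inputs check out: the description of $R^q\pi_*\QQ$ is correct, and your asserted structure of $H^*_{\mathrm{stable}}(M_g;\mathbb{V})$ as a free $R$-module on one generator in each odd degree $\geq 3$ (with $H^1_{\mathrm{stable}}=0$) is indeed Looijenga's theorem, with twisted homological stability available from Ivanov/Boldsen; the range loss inherent in twisted stability is harmless here because, as you implicitly use, Theorem~\ref{T:stable homology} already gives stability for $M_{g,n}$ with marked points, so you only need to identify the stable value for $g \gg i$. What your route buys is strictly more information (the stable twisted groups and their $R$-module structure, which the splitting does not see), plus a geometric explanation of why the answer is a polynomial ring on the $\psi_i$; what the paper's route buys is a two-line proof with no spectral-sequence bookkeeping. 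But be aware that the decisive content of your argument --- the control of $d_2$, equivalently the section classes and their Gysin pushforwards, the auxiliary symplectic-coefficient induction, and the verification that the resulting abstract isomorphism is the one induced by the tautological ring map $R_n \to H^*$ --- is exactly the part you defer as ``the main obstacle,'' so to count as a proof these steps would have to be carried out (or replaced by citations, e.g.\ to Looijenga's computation of the stable cohomology of $M_{g,n}$ and of the universal curve with twisted coefficients, which essentially contains your assembly).
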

\begin{proof}
This follows from the existence of a homotopy equivalence
\[
B\Gamma_{\infty,n+1} \sim B\Gamma_{\infty,n} \times \CC \mathbb{P}^\infty
\]
as constructed in \cite[Corollary~1.2]{bodigheimer-tillmann} (see also
\cite[Theorem~4.3]{tillmann}).
\end{proof}

\section{Stability and tautological classes: \'etale cohomology}
\label{S:stability etale}

We next translate the stability of cohomology from singular cohomology to compactly supported \'etale cohomology, and determine the effect of Frobenius on the stable cohomology classes, in order to use the Grothendieck-Lefschetz-Behrend trace formula.

\begin{lemma} \label{L:comparison of cohomology}
Choose an embedding of $\overline{\QQ}_p$ into $\CC$.
Let $\overline{Y}$ be a smooth proper scheme over $\Spec(\ZZ_p)$.
Let $Z$ be a relative normal crossings divisor on $\overline{Y}$.
Let $G$ be a finite group acting on both $\overline{Y}$ and $Z$.
Put $Y = \overline{Y} - Z$ and let $X$ be the stack-theoretic quotient $[Y/G]$.
Then there are functorial isomorphisms
\[
H^{i}_{\mathrm{et}}(X_{\overline{\FF}_q}, \QQ_{\ell}) \cong H^{i}_{\mathrm{et}}(X_{\CC}, \QQ_{\ell}) \cong H^{i}(X_{\CC}^{\an}, \QQ_{\ell}).
\]
\end{lemma}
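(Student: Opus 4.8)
The plan is to work out the comparison for the smooth proper scheme and its relative normal crossings boundary, and to treat the open complement $Y = \overline{Y} - Z$ and the stack quotient $X = [Y/G]$ by standard d\'evissage. Note first that $\ell \neq p$ (as $\ell \nmid q$ and $q$ is a power of $p$), so $\QQ_\ell$ has characteristic zero and $\ell$ is invertible on $\Spec(\ZZ_p)$. Since $G$ is finite and the coefficients have characteristic zero, in each of the three relevant cohomology theories the Hochschild--Serre spectral sequence $H^a(G, H^b(Y, \QQ_\ell)) \Rightarrow H^{a+b}([Y/G], \QQ_\ell)$ degenerates --- the higher cohomology of the finite group $G$ with coefficients in a $\QQ_\ell$-vector space vanishes --- and this degeneration is functorial. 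Hence $H^i_{\mathrm{et}}(X_{\overline{\FF}_q}, \QQ_\ell) = H^i_{\mathrm{et}}(Y_{\overline{\FF}_q}, \QQ_\ell)^G$, $H^i_{\mathrm{et}}(X_\CC, \QQ_\ell) = H^i_{\mathrm{et}}(Y_\CC, \QQ_\ell)^G$, and $H^i(X_\CC^{\an}, \QQ_\ell) = H^i(Y_\CC^{\an}, \QQ_\ell)^G$, so it suffices to produce functorial, $G$-equivariant isomorphisms for the scheme $Y$ and then pass to $G$-invariants.

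For the comparison $H^i_{\mathrm{et}}(Y_{\overline{\FF}_q}, \QQ_\ell) \cong H^i_{\mathrm{et}}(Y_\CC, \QQ_\ell)$, the key point is that $R^k(\pi \circ j)_*\QQ_\ell$ is a lisse sheaf on $\Spec(\ZZ_p)$, where $j \colon Y \hookrightarrow \overline{Y}$ is the open immersion and $\pi \colon \overline{Y} \to \Spec(\ZZ_p)$ the structure map. Indeed, because $Z$ is a \emph{relative} normal crossings divisor, cohomological purity computes $R^b j_* \QQ_\ell$ \'etale-locally as the pushforward of $\QQ_\ell(-b)$ from the disjoint union $Z^{(b)}$ of the $b$-fold intersections of the components of $Z$ (with $Z^{(0)} = \overline{Y}$), each of which is smooth and proper over $\Spec(\ZZ_p)$; feeding this into the Leray spectral sequence for $\pi \circ j$ shows that each $E_2$-term $R^a\pi_*(R^b j_* \QQ_\ell)$ is a direct sum of Tate twists of the sheaves $R^a(\pi|_{Z^{(b)}})_*\QQ_\ell$, which are lisse by the smooth and proper base change theorems, and hence --- since lisse $\QQ_\ell$-sheaves on the normal connected scheme $\Spec(\ZZ_p)$ form an abelian subcategory stable under kernels and cokernels --- the abutment $R^k(\pi \circ j)_*\QQ_\ell$ is lisse. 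A lisse sheaf on the Henselian trait $\Spec(\ZZ_p)$ has its geometric stalks at the closed point and at the generic point identified by the specialization map, so $H^k_{\mathrm{et}}(Y_{\overline{\FF}_q}, \QQ_\ell) \cong H^k_{\mathrm{et}}(Y_{\overline{\QQ}_p}, \QQ_\ell)$; combined with the invariance of \'etale cohomology under extension of algebraically closed fields along the chosen embedding $\overline{\QQ}_p \hookrightarrow \CC$, this gives $H^k_{\mathrm{et}}(Y_{\overline{\FF}_q}, \QQ_\ell) \cong H^k_{\mathrm{et}}(Y_\CC, \QQ_\ell)$, and all of these isomorphisms are functorial and $G$-equivariant.

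The remaining isomorphism $H^i_{\mathrm{et}}(Y_\CC, \QQ_\ell) \cong H^i(Y_\CC^{\an}, \QQ_\ell)$ is Artin's comparison theorem for the finite-type $\CC$-scheme $Y_\CC$, which is again functorial and $G$-equivariant. Passing to $G$-invariants throughout yields the lemma. I expect the only real work to be in the second paragraph --- the local purity computation of $Rj_* \QQ_\ell$ and the verification that the relative normal crossings hypothesis forces $R(\pi \circ j)_*\QQ_\ell$ to be lisse rather than merely constructible; the reduction from the stack $X$ to the scheme $Y$ and the invocation of Artin comparison are formal given characteristic-zero coefficients. (One could instead organize everything around the weight/Gysin spectral sequence for $Y$ in all three theories at once, but then one pays the price of checking that the residue differentials are compatible with base change and with Artin comparison, which is why I prefer to transport a single lisse sheaf through the chain above.)
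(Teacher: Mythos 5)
Your reduction to the scheme $Y$ by taking $G$-invariants via the Hochschild--Serre spectral sequence (which degenerates since higher cohomology of a finite group with $\QQ_\ell$-vector space coefficients vanishes) is exactly the paper's reduction. Where you genuinely differ is the scheme-level comparison: the paper disposes of it by citation, quoting Nakayama for the identification of $H^i_{\mathrm{et}}(Y_{\overline{\FF}_q},\QQ_\ell)$ with $H^i_{\mathrm{et}}(Y_{\CC},\QQ_\ell)$ and Freitag--Kiehl (with details as in Ellenberg--Venkatesh--Westerland) for the comparison with $H^i(Y^{\an}_{\CC},\QQ_\ell)$, whereas you reprove the arithmetic comparison by the classical d\'evissage: relative purity for the relative normal crossings divisor to compute $R^bj_*\QQ_\ell$ in terms of the smooth proper strata $Z^{(b)}$, smooth--proper base change on those strata, Leray for $\pi\circ j$, and lisse-ness over the Henselian trait $\Spec(\ZZ_p)$, followed by invariance under extension of algebraically closed fields and Artin comparison. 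This is a legitimate, more self-contained route --- essentially the proof underlying the cited results --- and it makes functoriality and $G$-equivariance transparent, at the cost of length where the paper buys brevity with references. Two points in your second paragraph need slightly more care: (i) to conclude that the abutment $R^k(\pi\circ j)_*\QQ_\ell$ is lisse you need the lisse $\QQ_\ell$-sheaves on the connected normal base to be closed not only under kernels and cokernels of the differentials but also under the extensions occurring in the filtration of the abutment (true here, but it should be said); and (ii) identifying the geometric stalks of $R^k(\pi\circ j)_*\QQ_\ell$ at the closed and generic points with the cohomology of the geometric special and generic fibers is a base-change assertion, which is not automatic for the non-proper morphism $\pi\circ j$; it follows from the same machinery --- formation of $R^bj_*\QQ_\ell$ commutes with base change for a relative normal crossings divisor with $\ell$ invertible on the base (tameness being automatic since $\ell\neq p$), together with smooth--proper base change on the $Z^{(b)}$ --- so you should invoke that compatibility explicitly rather than lisse-ness alone. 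With those additions your argument is complete and recovers by hand what the paper obtains by citing Nakayama.
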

\begin{proof}
In case $G$ is trivial, the first isomorphism follows from \cite[Proposition~4.3]{nakayama} and the second isomorphism follows from \cite[Theorem I.11.6]{FreitagK:lectures-etale} (for more details, see
\cite[Proposition~7.5]{EllenbergVW:cohenLenstra}). The general case follows from this special case by applying
the Hochschild-Serre spectral sequence \cite[Theorem 2.20]{Milne:etaleBook} to write
\[
H^{i}_{\mathrm{et}}(X_{\CC}, \QQ_{\ell}) \cong
H^{i}_{\mathrm{et}}(Y_{\CC}, \QQ_{\ell})^G, \qquad
H^{i}(X_{\CC}^{\an}, \QQ_{\ell}) \cong
H^{i}(Y_{\CC}^{\an}, \QQ_{\ell})^G.
\]
\end{proof}

\begin{lemma} \label{L:global quotient}
There exist a smooth projective scheme $\overline{Y}$ over $\Spec(\ZZ_p)$, a relative normal crossings divisor $Z$ on $\overline{Y}$, and a finite group $G$ acting on both $\overline{Y}$ and $Z$ such that for $Y = \overline{Y} - Z$,
the stack-theoretic quotient $[Y/G]$ is isomorphic to $M_{g,n,\ZZ_p}$.
\end{lemma}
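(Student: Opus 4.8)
The plan is to realize $M_{g,n,\ZZ_p}$ as a global quotient stack of the complement of a normal crossings divisor in a smooth projective scheme, using the classical technique of rigidifying moduli of curves by level structure. First I would pass to the Deligne--Mumford compactification: recall that $\overline{M}_{g,n}$ is a smooth proper Deligne--Mumford stack over $\Spec(\ZZ)$ (hence over $\Spec(\ZZ_p)$), and that the boundary $\partial \overline{M}_{g,n} = \overline{M}_{g,n} - M_{g,n}$ is a normal crossings divisor. So it suffices to present $\overline{M}_{g,n,\ZZ_p}$ as a global quotient $[\overline{Y}/G]$ of a smooth projective scheme by a finite group, compatibly with the boundary; then $Y$ is the preimage of $M_{g,n}$, $Z$ is the preimage of the boundary, and $Z$ is again relative normal crossings because the quotient map $\overline{Y} \to \overline{Y}/G$ is \'etale.

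Next I would construct $\overline{Y}$ via level structure. For a curve $C$ of genus $g$, choosing a symplectic basis of $H^1_{\mathrm{et}}(C, \ZZ/m\ZZ)$ (equivalently, a full level-$m$ structure on its Jacobian) for some fixed $m \geq 3$ coprime to $p$ rigidifies the moduli problem: the stack $M_{g,n}[m]$ of genus-$g$ curves with $n$ marked points and full level-$m$ structure is a scheme (for $m \geq 3$ the level structure kills all automorphisms, by a theorem of Serre-type for curves), it is smooth and quasi-projective over $\ZZ[1/m]$, and it carries a free action of $G := \mathrm{Sp}_{2g}(\ZZ/m\ZZ)$ with quotient $M_{g,n}$. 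The compactification $\overline{M}_{g,n}[m]$ — e.g.\ the normalization of $\overline{M}_{g,n}$ in the function field of $M_{g,n}[m]$, or the moduli space of stable curves with level structure in the sense of Deligne--Mumford or Abramovich--Corti--Vistoli — is a projective $\ZZ[1/m]$-scheme with a $G$-action whose quotient stack is $\overline{M}_{g,n,\ZZ[1/m]}$; we must check that $\overline{M}_{g,n}[m]$ can be taken smooth over $\ZZ_p$, which holds after possibly replacing $m$ by a suitable multiple or invoking the resolution results of de Jong--Oort or the stable-curves-with-level-structure constructions that are known to be smooth, and that the preimage of the boundary remains relative normal crossings. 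Then I would set $\overline{Y} = \overline{M}_{g,n}[m]_{\ZZ_p}$, which makes sense since $p \nmid m$.

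The main obstacle will be smoothness of the compactified level cover over $\ZZ_p$ together with the relative normal crossings property of the boundary divisor. The open part $M_{g,n}[m]$ is unproblematically smooth and its boundary-free, but the naive normalization of $\overline{M}_{g,n}$ in the level cover typically acquires quotient singularities along the boundary (the level structure does not extend nicely across nodal curves, where monodromy around the boundary acts nontrivially on torsion). The standard fixes — working with Deligne--Mumford's notion of stable curves with level structure, or the twisted-stable-maps compactifications of Abramovich--Corti--Vistoli, or passing to an auxiliary higher level and resolving — do produce a smooth projective $\overline{Y}$ with $G$ acting and with $Z$ relative normal crossings, but citing the correct reference and checking the hypotheses of Lemma~\ref{L:comparison of cohomology} is the delicate point. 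I would cite the relevant construction (e.g.\ \cite{deligne-mumford} for the level structures, combined with a resolution over $\ZZ_p$), verify that $G = \mathrm{Sp}_{2g}(\ZZ/m\ZZ)$ is finite and acts on both $\overline{Y}$ and $Z$, and conclude that $[Y/G] \cong M_{g,n,\ZZ_p}$ as required.
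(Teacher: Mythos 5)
Your overall strategy (rigidify by level structure, compactify, take $\overline{Y}$ to be the compactified level cover with its finite group action) is the same shape as the paper's argument, but the specific route you take has a genuine gap at exactly the point you flag as "delicate": abelian symplectic level-$m$ structures do not yield a smooth compactification with normal crossings boundary, for \emph{any} $m$, so "replacing $m$ by a suitable multiple" cannot repair it. Locally near a boundary point of $\overline{M}_{g,n}$ the deformation space is a polydisc with boundary $t_1\cdots t_k=0$, and the local fundamental group of the complement is generated by the Dehn twists about the vanishing cycles; a twist about a separating cycle acts trivially on $H^1(C,\ZZ/m\ZZ)$, and twists about homologous nonseparating cycles have equal images, so the kernel of the monodromy representation to $\mathrm{Sp}_{2g}(\ZZ/m\ZZ)$ is not generated by powers of the individual twists. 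Consequently the normalization of $\overline{M}_{g,n}$ in $M_{g,n}[m]$ acquires singularities of the type $xy=t^k$ along the boundary, independent of $m$; this is precisely why Looijenga (Prym level structures), Pikaart--de Jong, and Abramovich--Corti--Vistoli introduced \emph{nonabelian} level structures. Your fallback of "resolving" is also not available off the shelf: you would need a $G$-equivariant resolution over $\Spec(\ZZ_p)$ preserving the relative normal crossings condition, in mixed characteristic, which is not something you can simply cite. A smaller but real error is the claim that $Z$ is relative normal crossings "because the quotient map $\overline{Y}\to\overline{Y}/G$ is \'etale," and likewise the claim that the quotient stack of the compactified cover is $\overline{M}_{g,n}$: the level cover is ramified along the boundary, so neither statement holds there (fortunately the lemma only needs $[Y/G]\cong M_{g,n,\ZZ_p}$ on the open part, so the second claim is unnecessary rather than fatal).

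The paper's actual proof goes straight to the construction you mention only in passing: it cites \cite[\S 7.5]{ACV}, taking $\overline{Y}$ to be the moduli space of $n$-pointed stable curves equipped with a nonabelian level structure, i.e.\ a Galois cover of the curve with fixed finite Galois group $H$. This compactified level moduli space is smooth and projective with normal crossings boundary, and the residual finite group $G$ acts with $[Y/G]\cong M_{g,n}$. The role of your hypothesis $p\nmid m$ is played there by choosing $H$ of exponent equal to a product of two primes, both chosen coprime to $p$, so that the $H$-covers are tamely ramified and the whole construction goes through over $\Spec(\ZZ_p)$ (while $G$ itself may well have order divisible by $p$, which is harmless). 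So to complete your argument you would need to abandon the $\mathrm{Sp}_{2g}(\ZZ/m\ZZ)$-cover as the source of $\overline{Y}$ and work with the nonabelian level structures from the start, checking tameness as above.
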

\begin{proof}
This is a consequence of the construction of \cite[\S 7.5]{ACV}, in which a suitable $\overline{Y}$ is realized as the moduli space of $n$-pointed genus $g$ curves with a certain \emph{nonabelian level structure}, i.e., a suitable finite Galois cover with fixed Galois group $H$. Note that the group $H$ has exponent equal to the product of two arbitrary primes, and so may be forced to be coprime to $p$; this ensures that $H$-covers are tamely ramified, which allows the construction to go through over $\Spec(\ZZ_p)$. (By contrast, the group $G$ may have order divisible by $p$.)
\end{proof}

Put
\[
R_{n,\ell}:= R_n \otimes_{\QQ} \QQ_\ell = \QQ_\ell[\psi_1,\dots,\psi_n,\kappa_1,\kappa_2,\dots],
\]
again graded by $\deg(\psi_i)=2$ and $\deg(\kappa_j)=2j$. Equip $R_{n,\ell}$ with a $\QQ_\ell$-linear endomorphism $\Frob$ as follows:
 \[
\begin{cases}
 \Frob \psi_i &= q \psi_i\\
 \Frob \kappa_j &= q^{j} \kappa_j.
 \end{cases}
\]
Let $R^i_{n,\ell}$ denote the $i$th graded piece of the ring.  For each $g,n$, we have a homomorphism of graded rings (with $\Frob$ action)
\begin{equation}\label{E:taut}
R_{n,\ell}^i \to H^{*}_{\mathrm{et}}(M_{g,n,\overline{\FF}_q}, \QQ_{\ell})
\end{equation}
again factoring through the Chow ring.

\begin{theorem}\label{T:etale}
For $0\leq i \leq \frac{2g-2}{3}$, the homomorphism in Equation~\eqref{E:taut} gives an isomorphism
of Frobenius modules
$$
 R_{n,\ell}^i \cong H^{i}_{\mathrm{et}}(M_{g,n,\overline{\FF}_q}, \QQ_{\ell}).
$$
\end{theorem}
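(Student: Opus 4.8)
The plan is to combine the already-established singular-cohomology statement (Theorem~\ref{T:stab-taut-class} and its marked-point refinement) with the comparison isomorphisms of Lemma~\ref{L:comparison of cohomology}, using Lemma~\ref{L:global quotient} to put $M_{g,n}$ in the form to which Lemma~\ref{L:comparison of cohomology} applies, and then to track the Frobenius action through these identifications. First I would invoke Lemma~\ref{L:global quotient} to write $M_{g,n,\ZZ_p} \cong [Y/G]$ with $\overline{Y}$ smooth projective over $\Spec(\ZZ_p)$, $Z$ a relative normal crossings divisor, and $G$ finite. Applying Lemma~\ref{L:comparison of cohomology} to this presentation (with $q$ a power of $p$) yields, for every $i$, a chain of functorial isomorphisms
\[
H^{i}_{\mathrm{et}}(M_{g,n,\overline{\FF}_q}, \QQ_\ell) \cong H^{i}_{\mathrm{et}}(M_{g,n,\CC}, \QQ_\ell) \cong H^{i}(M^{\an}_{g,n,\CC}, \QQ_\ell).
\]
Tensoring the marked-point version of Theorem~\ref{T:stab-taut-class} up to $\QQ_\ell$ then identifies the right-hand side with $R^i_{n,\ell}$ for $0 \le i \le \frac{2g-2}{3}$, and one checks that the map realizing this isomorphism is the one induced by Equation~\eqref{E:taut}, since both are built from the same tautological classes ($\kappa_j$ and $\psi_i$) via pushforward along the point-forgetting morphisms, which are defined over $\ZZ$ and hence compatible with all the base-change and comparison maps.

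It remains to verify that this isomorphism is one of Frobenius modules, i.e.\ that geometric Frobenius acts on the class of $\kappa_j$ by $q^j$ and on $\psi_i$ by $q$. This is where the real content lies. The point is that $\kappa_j$ and $\psi_i$ are (Chern classes of) algebraic cycles defined over $\FF_q$: the relative dualizing sheaves and the forgetful morphisms all descend to $\FF_q$, so the classes lie in the image of $H^{2j}_{\mathrm{et}}(M_{g,n,\overline{\FF}_q},\QQ_\ell(j))^{\Frob}$ (resp.\ the $\psi_i$ in $H^2(\cdot)(1)^{\Frob}$). Equivalently, a codimension-$j$ algebraic cycle class has weight $2j$ and its Frobenius eigenvalue is $q^j$ by the compatibility of the cycle class map with Tate twists (the Frobenius acts on $\QQ_\ell(1)$ by $q^{-1}$). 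I would spell this out by noting that the cycle class map $\mathrm{CH}^j(M_{g,n,\FF_q}) \to H^{2j}_{\mathrm{et}}(M_{g,n,\overline{\FF}_q},\QQ_\ell(j))$ lands in the Frobenius-invariants, and that multiplicativity of the cycle class map reduces the case of an arbitrary monomial in the $\kappa$'s and $\psi$'s to the case of a single generator. Thus the endomorphism $\Frob$ defined abstractly on $R_{n,\ell}$ matches the geometric Frobenius on cohomology.

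The main obstacle is the Frobenius-equivariance of the comparison and descent isomorphisms combined with the identification of the tautological classes across characteristics: one must be careful that the isomorphism $H^i_{\mathrm{et}}(M_{g,n,\overline{\FF}_q},\QQ_\ell) \cong H^i(M^{\an}_{g,n,\CC},\QQ_\ell)$ of Lemma~\ref{L:comparison of cohomology}, while not itself Frobenius-equivariant (the complex side carries no Frobenius), nevertheless carries the \emph{subspace} spanned by tautological classes isomorphically onto $R^i_{n,\ell}$ in a way compatible with the Frobenius structure that the algebraic cycles impose on the characteristic-$p$ side. The cleanest way to handle this is to avoid comparing Frobenius structures directly through the topological isomorphism, and instead: (i) use the topological result only to deduce that $H^i_{\mathrm{et}}(M_{g,n,\overline{\FF}_q},\QQ_\ell)$ has the right dimension and is spanned by the images of the tautological classes for $i \le \frac{2g-2}{3}$; (ii) determine the Frobenius action purely $p$-adically from the fact that these spanning classes are algebraic cycle classes defined over $\FF_q$. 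One should also confirm that the spanning classes remain linearly independent in the range $i \le \frac{2g-2}{3}$ — but this is immediate from the dimension count supplied by the already-proven isomorphism over $\CC$.
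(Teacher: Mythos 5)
Your proposal is correct and follows essentially the same route as the paper: both use Lemma~\ref{L:global quotient} together with Lemma~\ref{L:comparison of cohomology} to compare \'etale cohomology over $\overline{\FF}_q$ with singular cohomology over $\CC$, identify the tautological classes across the comparison by functoriality of their construction (pushforward/pullback and Chern classes), deduce that the map~\eqref{E:taut} is an isomorphism in the stable range from the marked-point version of Theorem~\ref{T:stab-taut-class}, and obtain the Frobenius action from the fact that the classes are algebraic (Tate-type) cycle classes defined over $\FF_q$. Your more explicit discussion of the cycle class map and Tate twists simply spells out what the paper compresses into the sentence that the tautological classes ``arise from the Chow ring'' and are of Tate type.
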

\begin{proof}
Let $0\leq i \leq \frac{2g-2}{3}$.
Since the tautological classes arise from the Chow ring, they are of Tate type, so the map~\eqref{E:taut} is Frobenius-equivariant.
By Lemma~\ref{L:comparison of cohomology} and Lemma~\ref{L:global quotient},
given the choice of an embedding of $\overline{\QQ}_p$ into $\CC$,
there is a chain of functorial isomorphisms
\begin{equation}\label{E:taut-isos}
 R_{n,\ell}^i \to H^{i}_{\mathrm{et}}(M_{g,n,\overline{\FF}_q}, \QQ_{\ell}) \cong H^{i}_{\mathrm{et}}(M_{g,n,\CC}, \QQ_{\ell}) \cong H^{i}(M_{g,n,\CC}^{\an}, \QQ_{\ell}).
\end{equation}
Each step in the formation of the tautological classes involves either pushing forward or pulling back cohomology classes, or formation of Chern classes (which by \cite[Theorem 10.3]{Milne:etaleBook} are characterized entirely by certain maps on cohomology). Since each map in Equation~\eqref{E:taut-isos} is functorial, the tautological classes thus map to tautological classes. The composition is thus the isomorphism obtained from Theorem \ref{T:stab-taut-class} by extending scalars from $\QQ$ to $\QQ_{\ell}$; in particular, it does not depend on the embedding of $\overline{\QQ}_p$ into $\CC$. This means that in \eqref{E:taut-isos}, the composition and all but one of the maps are isomorphisms, so the remaining one is also an isomorphism and the claim follows.
\end{proof}

\begin{corollary}
For $0 \leq i \leq \frac{2g-2}{3}$, the following is true.
\begin{enumerate}
\def\theenumi{\alph{enumi}}
\item
If $i$ is odd, then $H^{2 d_{g,n} -i}_{c,\mathrm{et}}(M_{g,n,\overline{\FF}_q}, \QQ_\ell) = 0$.
\item
If $i$ is even, then
$H^{2 d_{g,n} -i}_{c,\mathrm{et}}(M_{g,n,\overline{\FF}_q}, \QQ_\ell)$
has $\QQ_\ell$-dimension equal to that of $R^i_{n,\ell}$, and $\Frob$ acts on it by multiplication by $q^{d_{g,n} - i/2}$.
\end{enumerate}
\end{corollary}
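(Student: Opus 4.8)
The plan is to deduce this corollary from Theorem~\ref{T:etale} by applying Poincar\'e duality for the smooth Deligne-Mumford stack $M_{g,n,\overline{\FF}_q}$, which has pure dimension $d_{g,n}$. Concretely, for each $i$ there is a perfect pairing
\[
H^{i}_{\mathrm{et}}(M_{g,n,\overline{\FF}_q}, \QQ_\ell) \times H^{2d_{g,n}-i}_{c,\mathrm{et}}(M_{g,n,\overline{\FF}_q}, \QQ_\ell) \to H^{2d_{g,n}}_{c,\mathrm{et}}(M_{g,n,\overline{\FF}_q}, \QQ_\ell) \cong \QQ_\ell(-d_{g,n}),
\]
which identifies $H^{2d_{g,n}-i}_{c,\mathrm{et}}$ with the Tate twist $H^{i}_{\mathrm{et}}(M_{g,n,\overline{\FF}_q}, \QQ_\ell)^{\vee}(-d_{g,n})$. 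I would first record that Poincar\'e duality does hold in this generality: one can either invoke it directly for smooth DM stacks, or reduce to the scheme case using the global quotient presentation $M_{g,n,\ZZ_p} = [Y/G]$ from Lemma~\ref{L:global quotient}, taking $G$-invariants of Poincar\'e duality on the smooth $Y$ (this is the same reduction already used in Lemma~\ref{L:comparison of cohomology}, so the duality is compatible with passing to invariants since $\# G$ is invertible in $\QQ_\ell$).

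Granting the duality isomorphism, the rest is bookkeeping with Frobenius weights, valid in the range $0 \le i \le \frac{2g-2}{3}$ where Theorem~\ref{T:etale} applies. By that theorem, $H^{i}_{\mathrm{et}}(M_{g,n,\overline{\FF}_q}, \QQ_\ell) \cong R^i_{n,\ell}$ as Frobenius modules. Since $R_{n,\ell}$ is a polynomial ring with generators in even degrees ($\deg \psi_i = 2$, $\deg \kappa_j = 2j$), the graded piece $R^i_{n,\ell}$ vanishes when $i$ is odd, giving part (a) after dualizing. When $i$ is even, every Frobenius eigenvalue on $R^i_{n,\ell}$ equals $q^{i/2}$: indeed a monomial $\prod \psi_{a}^{e_a} \prod \kappa_j^{f_j}$ lying in degree $i$ has $2\sum e_a + \sum 2 j f_j = i$, and $\Frob$ scales it by $q^{\sum e_a + \sum j f_j} = q^{i/2}$. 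Dualizing multiplies the eigenvalue by $q^{d_{g,n}}$ (the Tate twist $(-d_{g,n})$) and inverts it, so $\Frob$ acts on $H^{2d_{g,n}-i}_{c,\mathrm{et}}$ by $q^{d_{g,n}} \cdot (q^{i/2})^{-1} = q^{d_{g,n}-i/2}$; the dimension is preserved under duality, giving part (b).

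The main obstacle is really the first step: ensuring Poincar\'e duality with $\QQ_\ell$-coefficients is available for $M_{g,n,\overline{\FF}_q}$ as a smooth DM stack, together with the identification of the Frobenius action on the top compactly supported cohomology as the expected Tate twist $\QQ_\ell(-d_{g,n})$. For stacks this can be cited from the literature on cohomology of algebraic stacks (e.g.\ the formalism extending \'etale cohomology and duality to DM stacks), or, as noted, bypassed entirely via the presentation $[Y/G]$: Poincar\'e duality on the smooth projective-minus-divisor scheme $Y$ over $\overline{\FF}_q$ is standard, it is $G$-equivariant, and taking $G$-invariants commutes with everything in sight because $\QQ_\ell$ has characteristic zero. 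Once that is in hand, everything else is the elementary weight computation sketched above, and the stated bound $i \le \frac{2g-2}{3}$ is exactly what is needed to invoke Theorem~\ref{T:etale}.
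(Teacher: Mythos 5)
Your proposal is correct and follows the same route as the paper: Poincar\'e duality for the smooth stack $M_{g,n,\overline{\FF}_q}$, reduced to the smooth scheme case via the global quotient presentation and passage to $G$-invariants (the paper cites the Hochschild--Serre spectral sequence for this), combined with Theorem~\ref{T:etale} and the weight bookkeeping you describe. The paper states this in two sentences; you have merely spelled out the same argument in more detail.
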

\begin{proof}
Since $M_{g,n,\overline{\FF}_q}$ is smooth, we may apply Poincar\'e duality for \'etale cohomology to deduce the claim from Theorem~\ref{T:etale}. (As in the proof of Theorem \ref{T:etale}, we may deduce duality for $M_{g,n,\overline{\FF}_q}$ from duality for smooth schemes via the Hochschild-Serre spectral sequence.)
\end{proof}

In our Grothendieck-Lefschetz trace computation, we will handle different parts of the cohomology of $M_{g,n,\overline{\FF}_q}$ in different ways.  We thus define
\begin{align*}
  \mathbf{T}^\stable_{g,n,q} &:= \sum_{0 \le i \le \floor{\frac{2g-2}3}}
  (-1)^i \Tr( \Frob, H^{2 d_{g,n}-i}_\cet(M_{g,n,\overline{\FF}_q},\QQ_\ell)) \\
  \mathbf{T}^\nonstable_{g,n,q} &:=
 \sum_{\floor{\frac{2g-2}3}< i \le 2 d_{g,n}} (-1)^i \Tr( \Frob, R^{2 d_{g,n} -i}_\cet(M_{g,n,\overline{\FF}_q},\QQ_\ell))\\
\mathbf{N}_{g,n,q} &:= \sum_{\floor{\frac{2g-2}3} < i \le 2 d_{g,n}} (-1)^i
 \Tr (\Frob, B_\cet^{2d_{g,n}-i}(M_{g,n,\overline{\FF}_q},\QQ_\ell)).
\end{align*}
Note that, since these account for all of the cohomology of $M_{g,n,\overline{\FF}_q}$, we have
\begin{equation}\label{eqn:MgnFq}
\#|M_{g,n}(\FF_q)| = \mathbf{T}^\stable_{g,n,q}+ \mathbf{T}^\nonstable_{g,n,q}+\mathbf{N}_{g,n,q}.
\end{equation}

\section{Heuristic~\ref{heur:tautological} yields Conjecture~\ref{conj:poisson1}}
\label{S:heuristic implies conjecture}

In this section, we prove Theorem~\ref{T:getconj}. We first note that
Heuristic~\ref{heur:tautological} is equivalent to the assertion that
\begin{equation}
\label{eqn:Ngn}
\lim_{g \rightarrow \infty}  q^{-d_{g,n}} \mathbf{N}_{g,n,q} = 0.
\end{equation}
Thus, to prove Theorem~\ref{T:getconj}, we need to understand the limiting behavior of $\mathbf{T}^\stable_{g,n,q}$ and $\mathbf{T}^\nonstable_{g,n,q}$.

Let $R_{n}$ be the tautological ring as defined in Section~\ref{S:stability}.  Note that the Hilbert series (or Poincar\'e series) $HS_{R_{n}}(z) :=\sum_{i=0}^\infty \dim R_{n}^{2i} \cdot z^{2i}$ may be rewritten as
\[
HS_{R_{n}}(z) = \prod_{i=1}^n \frac{1}{1-z^2} \prod_{j=1}^{\infty} \frac{1}{1-z^{2j}}.
\]

\begin{lemma}
\label{L:Tasymptotic}
We have the following:
\begin{enumerate}
\def\theenumi{\alph{enumi}}
\item $\lim_{g\rightarrow\infty}    q^{-d_{g,n}} \mathbf{T}^\stable_{g,n,q} =
    HS_{R_n}(q^{-1/2})$;
\item $\lim_{g\rightarrow\infty}    q^{-d_{g,n}} \mathbf{T}^\nonstable_{g,n,q} = 0$.
\end{enumerate}
\end{lemma}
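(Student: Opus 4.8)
For part (a), the strategy is to compute $\mathbf{T}^\stable_{g,n,q}$ directly using the Corollary to Theorem~\ref{T:etale}. That corollary tells us that in the stable range $0 \le i \le \floor{\frac{2g-2}{3}}$, the group $H^{2d_{g,n}-i}_\cet(M_{g,n,\overline{\FF}_q},\QQ_\ell)$ vanishes when $i$ is odd, and when $i = 2m$ is even it has dimension $\dim R^{2m}_{n,\ell} = \dim R_n^{2m}$ with $\Frob$ acting as the scalar $q^{d_{g,n}-m}$. Therefore
\[
\mathbf{T}^\stable_{g,n,q} = \sum_{0 \le 2m \le \floor{\frac{2g-2}{3}}} \dim R_n^{2m} \cdot q^{d_{g,n}-m},
\]
so that $q^{-d_{g,n}} \mathbf{T}^\stable_{g,n,q} = \sum_{0 \le m \le \frac12\floor{\frac{2g-2}{3}}} \dim R_n^{2m} \cdot q^{-m}$. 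This is a partial sum of the series $HS_{R_n}(q^{-1/2}) = \sum_{m \ge 0} \dim R_n^{2m} q^{-m}$. To conclude, I need this series to converge, which reduces to showing that $\dim R_n^{2m}$ grows subexponentially in $m$ — indeed $\dim R_n^{2m}$ is a partition-type counting function, polynomially bounded times subexponential, as is visible from the product formula $HS_{R_n}(z) = (1-z^2)^{-n}\prod_{j\ge1}(1-z^{2j})^{-1}$, whose radius of convergence is $1$; hence the series in $q^{-1/2}$ converges for $q > 1$. Since the number of terms $\to \infty$ as $g \to \infty$, the partial sums converge to $HS_{R_n}(q^{-1/2})$, giving (a).

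For part (b), I need to bound the tail contribution $\mathbf{T}^\nonstable_{g,n,q}$, which involves only the \emph{tautological} classes $R^{2d_{g,n}-i}_\cet$ in degrees $i > \floor{\frac{2g-2}{3}}$. The point is that, although there are many such classes (their number can grow exponentially in $g$, which is exactly the obstruction mentioned in the introduction for the $\overline{M}_{g,n}$ analogue), here each contributes with a small weight. By Poincaré duality and purity, $\Frob$ acts on the coweight-$i$ tautological part with eigenvalues of absolute value $q^{d_{g,n}-i/2}$, so by the triangle inequality
\[
q^{-d_{g,n}}\, |\mathbf{T}^\nonstable_{g,n,q}| \le \sum_{\floor{\frac{2g-2}{3}} < i \le 2d_{g,n}} \dim R^{2d_{g,n}-i}_\cet(M_{g,n,\overline{\FF}_q},\QQ_\ell) \cdot q^{-i/2}.
\]
The dimension of the coweight-$i$ tautological cohomology is at most $\dim R_n^{i}$ (the full tautological ring surjects onto it — or one uses the known description of $R^*_{c,\mathrm{et}}$), so the sum is bounded by $\sum_{i > \floor{\frac{2g-2}{3}}} \dim R_n^i \cdot q^{-i/2}$, which is precisely the tail (beyond degree $\sim \frac{2g-2}{3}$) of the convergent series $HS_{R_n}(q^{-1/2})$ from part (a). Since that series converges for fixed $q > 1$, its tail $\to 0$ as the cutoff $\frac{2g-2}{3} \to \infty$, i.e. as $g \to \infty$, which proves (b).

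The main obstacle — though a mild one — is the input bound $\dim R^{2d_{g,n}-i}_\cet \le \dim R_n^i$ (and more generally pinning down exactly which graded pieces of $R_{n,\ell}$ appear and with what Frobenius weights in the unstable range), together with confirming the subexponential growth of $\dim R_n^{2m}$; both are elementary, the first from the surjection $R_{n,\ell} \twoheadrightarrow R^*_\cet$ combined with Poincaré duality, the second from elementary analysis of the product formula for $HS_{R_n}$. Everything else is bookkeeping with geometric series.
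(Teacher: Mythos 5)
Your proposal is correct and takes essentially the same approach as the paper: part (a) by reading off the Frobenius action and dimensions in the stable range from the Corollary to Theorem~\ref{T:etale} and letting the partial sums converge to $HS_{R_n}(q^{-1/2})$, and part (b) by bounding the unstable tautological cohomology by the corresponding graded pieces of $R_n$ via the surjection from $R_{n,\ell}$, so that the contribution is dominated by the tail of the convergent Hilbert series. The only cosmetic difference is that the paper makes the subexponential growth explicit, $\dim R_n^{2i} \leq \exp(c_n \sqrt{i})$, via partition numbers, where you instead invoke convergence of $HS_{R_n}$ directly.
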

\begin{proof}

For the first statement, we compute:
\begin{align*}
\lim_{g\to \infty} q^{-d_{g,n}}\mathbf T^\stable_{g,n,q}&=
\lim_{g\to \infty} q^{-d_{g,n}} \sum_{i=0}^{\lfloor \frac{2g-2}{3}\rfloor} (-1)^i \Tr(\Frob, R^{2d_{g,n}-i}_{c,\mathrm{et}}(M_{g,n,\overline{\FF}_q}, \mathbb{Q}_{\ell})) \\
&=\lim_{g\to \infty} \sum_{j=0}^{\lfloor \frac{g-1}{3}\rfloor} q^{ - j}\cdot \dim R_{n}^{2j}\\
&=\sum_{j=0}^{\infty} q^{- j}\cdot \dim R_{n}^{2j}.\\
\intertext{Using the Hilbert series of $R_{n}$, we may then rewrite the above sum as}
&= HS_{R_{n}}(q^{-1/2})\\
&= \prod_{i=1}^n \frac{1}{1-q^{-1}} \prod_{j=1}^{\infty} \frac{1}{1-q^{-j}}.
\end{align*}
Note that we use the fact (from Theorem~\ref{T:etale}) that for $0\leq i \leq \frac{2g-2}{3}$, we have
$$
 R_{n,\ell}^i = R_{n}^i \otimes_{\QQ} \QQ_{\ell} \cong R^{2d_{g,n}-i}_{c,\mathrm{et}}(M_{g,n,\overline{\FF}_q}, \mathbb{Q}_{\ell}).
$$

For the second part, we let $P(z)$ be the generating function for the partition numbers $p(j)$, and let $Q_n(z) := \sum \binom{n+j-1}{j}z^j$ be the generating function whose $j$th coefficient is the number of multisets of size $j$ on $n$ elements.  Then
\[
HS_{R_{n}}(z) = Q_n(z^2)P(z^2).
\]
In particular
\begin{equation}
\label{eqsubexp}
\dim R^{2i}_n = \sum_{j=0}^i \binom{n+j-i}{j-1}p(i-j)\leq \exp(c_n\sqrt{i}).
\end{equation}
Since $R^*_{c,\mathrm{et}}(M_{g,n,\overline{\FF}_q}, \QQ_{\ell})$ is defined in terms of the image of a map from $R_n$ to cohomology,  we further obtain
\begin{equation}
\label{eqsubexp2}
\dim R^{2d_{g,n}-2i}_\cet(M_{g,n,\overline{\FF}_q},\mathbb Q_{\ell}) \leq \exp(c_n\sqrt{i}).
\end{equation}
Of course, when $i$ is odd this group is zero-dimensional.

We compute
\begin{align*}
\lim_{g\rightarrow\infty}    q^{-d_{g,n}} \mathbf{T}^\nonstable_{g,n,q} &= \lim_{g\rightarrow\infty}q^{-d_{g,n}}\sum_{i=\floor{\frac{2g-2}3}+1}^{2d_{g,n}} (-1)^i \Tr( \Frob, R^{2d_{g,n}-i}_\cet(M_{g,n,\overline{\FF}_q},\QQ_\ell))\\
 &\le \lim_{g\rightarrow\infty}\sum_{i=\floor{\frac{2g-2}3}+1}^{2d_{g,n}} (-1)^iq^{-(i/2)} \dim R^{2i}_n\\
&\le \lim_{g\rightarrow\infty}\sum_{i=\floor{\frac{2g-2}3}+1}^{2d_{g,n}} (-1)^iq^{-(i/2)}\exp(c_n\sqrt{i})\\
&=0.
\end{align*}
\end{proof}

\begin{proof}[Proof of Theorem~\ref{T:getconj}]
By combining Equations~\eqref{eqn:MgnFq} and \eqref{eqn:Ngn} and Lemma~\ref{L:Tasymptotic} we get:
\begin{align*}
\lim_{g\to \infty} \frac{\#|M_{g,n}(\mathbb F_q)|}{\#|M_{g}(\mathbb F_q)|} &=
\lim_{g\rightarrow\infty}
\frac{\mathbf{T}^\stable_{g,n,q}+ \mathbf{T}^\nonstable_{g,n,q}+\mathbf{N}_{g,n,q}}{\mathbf{T}^\stable_{g,0,q}+ \mathbf{T}^\nonstable_{g,0,q}+\mathbf{N}_{g,0,q}}\\
&=q^n\frac{HS_{R_n}(q^{-1/2})+ 0+0}{HS_{R}(q^{-1/2})+0+0}\\
&=q^n \prod_{i=1}^n \frac{1}{1-q^{-1}}\\
&=\lambda^n.
\end{align*}
\end{proof}

\section{Proof of Theorem~\ref{T:weaker}}
\label{S:weaker}
In contrast to the previous sections, where $q$ was fixed, in this section we consider a case where $q$ and $g$ both go to infinity.  We show that, so long as $q$ goes to infinity much faster than $g$, then we obtain an unconditional version of Conjecture~\ref{conj:poisson1}.

The following lemma is the key result for this section, as it essentially shows that if $q\gg g$, then the main terms in the Grothendieck-Lefschetz trace computation will come from the stable cohomology range.
\begin{lemma}\label{L:constant K}
For any $K>144$ and any nonnegative integer $n$,
there exists a constant $K' = K'(n) > 0$
such that if $g > K'(n+1)$ and $q > g^K$, then
\[
\left| \mathbf T^\nonstable_{g,n,q}+ \mathbf N_{g,n,q} \right| < q^{d_{g,n}-g/6}.
\]
\end{lemma}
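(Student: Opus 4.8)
The plan is to bound $\mathbf{T}^\nonstable_{g,n,q}+\mathbf{N}_{g,n,q}$ directly as a tail of the Grothendieck--Lefschetz sum, using only Deligne's weight estimate together with a crude super-exponential bound on the total Betti number of $M_{g,n}$. Since $R^{*}_{\cet}$ is a $\Frob$-stable subspace of $H^{*}_{\cet}$ and $B^{*}_{\cet}=H^{*}_{\cet}/R^{*}_{\cet}$, additivity of the trace gives
\[
\mathbf{T}^\nonstable_{g,n,q}+\mathbf{N}_{g,n,q}=\sum_{\floor{\frac{2g-2}{3}}<i\le 2d_{g,n}}(-1)^{i}\Tr\bigl(\Frob,H^{2d_{g,n}-i}_{\cet}(M_{g,n,\overline{\FF}_q},\QQ_\ell)\bigr).
\]
By Deligne's proof of the Riemann hypothesis for varieties over finite fields, every eigenvalue of $\Frob$ on $H^{2d_{g,n}-i}_{\cet}$ has complex absolute value at most $q^{(2d_{g,n}-i)/2}=q^{d_{g,n}-i/2}$, and $i>\floor{\frac{2g-2}{3}}$ forces $q^{-i/2}<q^{-(g-1)/3}$. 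Writing $\beta(g,n):=\sum_{j}\dim H^{j}_{\cet}(M_{g,n,\overline{\FF}_q},\QQ_\ell)$, the triangle inequality therefore yields
\[
\bigl|\mathbf{T}^\nonstable_{g,n,q}+\mathbf{N}_{g,n,q}\bigr|\le q^{d_{g,n}-(g-1)/3}\,\beta(g,n),
\]
so it suffices to prove $\beta(g,n)<q^{(g-2)/6}$ whenever $g>K'(n+1)$ and $q>g^{K}$ with $K>144$.

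The heart of the matter is thus an effective bound on $\beta(g,n)$. By Poincar\'e duality for \'etale cohomology and the comparison isomorphism of Lemma~\ref{L:comparison of cohomology} (available thanks to the global quotient presentation of Lemma~\ref{L:global quotient}), one has $\beta(g,n)=\sum_{j}\dim H^{j}(M_{g,n,\CC}^{\an},\QQ)$. For $n\ge1$ I would invoke the combinatorial (ribbon graph) model of Harer, Mumford, Thurston, and Penner: $M_{g,n,\CC}^{\an}$ is, up to homotopy and an inessential factor $\mathbb{R}^{n}_{>0}$, a finite orbi-CW complex whose cells are indexed by isomorphism classes of connected ribbon graphs of genus $g$ with $n$ labelled boundary cycles and all vertices of valence $\ge3$, so that $\beta(g,n)$ is at most the number of such graphs. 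Such a graph has $E\le 6g-6+3n$ edges and $V\le 4g-4+2n$ vertices, and it is determined by its underlying (multi)graph (at most $(2V^{2})^{E}$ choices), the cyclic orders of the half-edges at the vertices (at most $(2E)!$ choices), and the labelling of the boundary cycles (at most $n!$ choices); since $(2V^{2})^{E}$ and $(2E)!$ each contribute a factor of $g$-degree $12g$, this produces a bound of the shape $\beta(g,n)\le (Cg)^{24g}\,P_n(g)$ with $C$ an absolute constant and $P_n$ a polynomial. The case $n=0$ reduces to $n=1$ because the forgetful map $M_{g,1}\to M_{g}$ is a proper orbifold surface bundle whose fibre has Euler characteristic $2-2g\ne0$, so cup product with the corresponding $\psi$-class exhibits pullback as an injection on rational cohomology, whence $\beta(g,0)\le\beta(g,1)$. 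Feeding $\beta(g,n)\le (Cg)^{24g}P_n(g)$ into the displayed inequality and using $q>g^{K}$ with $K>144$, one has $q^{(g-2)/6}\ge g^{K(g-2)/6}$, and since $K/6>24$ the exponent $\tfrac{K(g-2)}{6}\log g$ eventually beats $24g\log g+\deg(P_n)\log g+O(g)$; taking $K'(n+1)$ to be a value of $g$ past which this happens completes the proof.

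The only genuinely delicate point is this combinatorial estimate on $\beta(g,n)$: one needs the exponent of $g$ to be an absolute constant --- here $24$, which is exactly what forces the threshold $K>144$ --- while all $n$-dependence is confined to a factor that is polynomial in $g$ for each fixed $n$, which is why the hypothesis is phrased in terms of a constant $K'(n+1)$. Everything else --- additivity of traces, Deligne's bound, the passage between compactly supported \'etale cohomology over $\overline{\FF}_q$ and singular cohomology over $\CC$ set up in Section~\ref{S:stability etale}, and the final comparison of a super-exponential function of $g$ with a power of $q>g^{K}$ --- is routine.
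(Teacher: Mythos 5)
Your proposal is correct in outline and in its conclusion, and its skeleton (additivity of the trace over the sub/quotient decomposition, Deligne's weight bound $|\alpha|\le q^{j/2}$ on $H^j_{c,\mathrm{et}}$, a super-exponential bound on the total Betti number, and then the numerical comparison against $q>g^K$ with $K>144$) is the same as the paper's. Where you genuinely diverge is in the key input, the Betti bound: the paper simply quotes \cite[Lemma 5.1]{deJongKatz} to get $\sum_i \dim H^i_{c,\mathrm{et}}(M_{g,\overline{\FF}_q},\QQ_\ell)\le (12g)!$ and then fibers up to $n$ marked points via the Leray/Serre spectral sequence for the forgetful maps, obtaining $(2+2g)^n(12g)!\approx g^{12g}$, whereas you prove a self-contained bound of shape $(Cg)^{24g}P_n(g)$ from the Harer--Mumford--Thurston--Penner ribbon-graph decomposition for $n\ge 1$ and fiber \emph{down} to $n=0$ via injectivity of $\pi^*$ for the universal curve ($\pi_*(\psi_1\cup\pi^*\alpha)=(2g-2)\alpha$). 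Both bounds fit inside the hypothesis: the paper's sharper $g^{12g}$ leaves slack (their write-up spends it on a factor-of-two simplification in the ``it suffices'' step), while your cruder $g^{24g}$ consumes the budget exactly, since $K/6>24$ is precisely $K>144$; your route has the merit of being independent of \cite{deJongKatz}, at the cost of the counting work. The one step you should tighten is the assertion that $\beta(g,n)$ is at most the number of ribbon graphs: the metric ribbon-graph space is not a closed orbi-CW complex (closures of cells leave the space when edge contractions produce forbidden graphs), so the bound does not follow from naive cell counting; instead, use the spectral sequence of the stratification for compactly supported cohomology --- each stratum $\mathbb{R}^{E(G)}_{>0}/\Aut(G)$ contributes at most one dimension to $H^*_c$ --- together with Poincar\'e duality for the smooth Deligne--Mumford stack (and the K\"unneth shift for the $\mathbb{R}^n_{>0}$ factor), which is consistent with the comparison isomorphisms you already invoke from Section~\ref{S:stability etale}. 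With that justification supplied, your argument is complete, and the $n$-dependence of your constants is harmless because the lemma allows $K'$ to depend on $n$.
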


\begin{proof}
We bound the total cohomology of $M_{g,n,\overline{\FF}_q}$ by
$$
\sum_{i} \dim H^{i}_{c,\mathrm{et}}(M_{g,n,\overline{\FF}_q}, \mathbb{Q}_{\ell}) \leq (2+2g)^n (12g)!.
$$
For $n=0$, see \cite[Lemma 5.1]{deJongKatz}.  For general $n$, the bound follows from $n=0$ by iteratively applying the Serre spectral sequence for $M_{g,i+1,\overline{\FF}_q}$ over $M_{g,i, \overline{\FF}_q}$.

In addition, we note that each cohomology group which arises in the calculation of $\mathbf T^\nonstable_{g,n,q}$ and $\mathbf N_{g,n,q}$ is mixed of weight less than $2d_{g,n}-\floor{\frac{2g-2}{3}}.$  We thus have
\[
\left| \mathbf T^\nonstable_{g,n,q}+ \mathbf N_{g,n,q} \right| < q^{d_{g,n}- \floor{\frac{g-1}{3}}}(2g+2)^n(12g)!.
\]
To ensure that this is at most $q^{d_{g,n}-g/6}$, it suffices to take $q$ satisfying
\[
\left(\floor{\frac{g-1}3} - \frac{g}{6}\right)\log(q) > n\log(2g+2) + 12g \log(12g),
\]
which would in turn follow from
\[
\frac{1}{2}\left(\floor{\frac{g-1}3} - \frac{g}{6}\right)\log(q) > \max\{n\log(2g+2), 12g \log(12g)\}.
\]
Since $K > 144$, for any sufficiently small $\epsilon > 0$ we can choose $K'$ such that
for $g > K'(n+1)$ and $q > g^K$,
\[
\left(\floor{\frac{g-1}3} - \frac{g}{6}\right)\log(q) >
(1-\epsilon) \frac{g}{6}
\]
and
\[
\log(q) > \frac{144}{1-\epsilon} \log(12g).
\]
This proves the claim.
\end{proof}

\begin{proof}[Proof of Theorem~\ref{T:weaker}]
We combine Equation \eqref{eqn:MgnFq} and Lemmas~\ref{L:Tasymptotic} and ~\ref{L:constant K} to compute
\begin{align*}
\lim_{g\to \infty} \frac{\#|M_{g,n}(\mathbb F_q)|}{\#|M_{g}(\mathbb F_q)|} &=
\lim_{g\rightarrow\infty}
\frac{\mathbf{T}^\stable_{g,n,q}+ \mathbf{T}^\nonstable_{g,n,q}+\mathbf{N}_{g,n,q}}{\mathbf{T}^\stable_{g,0,q}+ \mathbf{T}^\nonstable_{g,0,q}+\mathbf{N}_{g,0,q}}\\
&=q^n\frac{HS_{R_n}(q^{-1/2})+ O(q^{-g/6})}{HS_{R}(q^{-1/2})+O(q^{-g/6})}\\
&=\lambda^n+O(q^{-g/6}).
\end{align*}
\end{proof}

\section{Connections to random matrix models}
\label{S:random matrix models}

Since much previous intuition about the behavior of random curves has come from the world of random matrix models, we would like to close with an invitation to the random matrix theory community to come up with evidence in favor of or opposed to Conjecture~\ref{conj:poisson1}. Let us say a few words about how this might be possible.

When $q$ is large compared to $g$, one typically models the behavior of the zeta function of a random curve $C$ of genus $g$ over $\FF_q$ by positing that the normalized characteristic polynomial of Frobenius behaves like that of a random matrix $M$ in the unitary symplectic group $\USp(2g)$. Equivalently, the sequence of point counts $\{\#C(\FF_{q^n})\}_{n=1}^\infty$ has the same distribution as
$\{q^n+1-q^{n/2} \Tr(M^n)\}_{n=1}^\infty$.

This model fails to apply in the case of fixed $q$ for three different reasons.
\begin{itemize}
\item
\textbf{Discreteness:}
For $n$ a positive integer,
because $\#C(\FF_{q^n}) \in \ZZ$,
one must insist that $\Tr(M^n) \in q^{-n/2} \ZZ$.
\item
\textbf{Positivity:}
Because $\#C(\FF_q) \geq 0$, one must insist that $q+1-q^{1/2} \Tr(M) \geq 0$.
\item
\textbf{More positivity}:
For $n_1, n_2$ two positive integers,
because $\#C(\FF_{q^{n_1n_2}}) \geq \#C(\FF_{q^{n_1}})$, one must insist that
$q^{n_1n_2} +1 - q^{n_1n_2/2}\Tr(M^{n_1n_2}) \geq
q^{n_1}+1-q^{n_1/2} \Tr(M^{n_1})$.
\end{itemize}

It seems unlikely that the statistics for such restricted random matrices can be computed in closed form, even in the limit as $g \to \infty$. However, it may be feasible to make numerical experiments for particular values of $q$ and $g$ to see how they compare to the predictions made by Conjecture~\ref{conj:poisson1}.

\bibliographystyle{hunsrt}
\bibliography{random}
\end{document}